\newtheorem{theorem}{Theorem}
\newtheorem{lemma}{Lemma}
\newtheorem{corollary}{Corollary}
\newtheorem{example}{Example}
\newtheorem{remark}{Remark}
\newtheorem{definition}{Definition}
\newenvironment{proof}{\begin{trivlist} \item[\hskip\labelsep{\it Proof.}]}{$\hfill\Box$\end{trivlist}}
\newcommand{\rd}{\,\mathrm{d}}
\newcommand{\bsx}{\boldsymbol{x}}
\newcommand{\RR}{\mathbb{R}}
\newcommand{\NN}{\mathbb{N}}
\newcommand{\cR}{\mathcal{R}}
\newcommand{\id}{\mathrm{id}}
\newcommand{\sym}{{\rm sym}}
\newcommand{\cP}{\mathcal{P}}
\newcommand{\osigma}{\overline{\sigma}}
\title{$L_2$ discrepancy of symmetrized generalized Hammersley point sets in base $b$}
\author{Ralph Kritzinger\thanks{R. Kritzinger is supported by the Austrian Science Fund (FWF): Project F5509-N26, which is a part of the Special Research Program "Quasi-Monte Carlo Methods: Theory and Applications".} \, and Lisa M. Kritzinger}
\date{}
\begin{document}

\maketitle

\begin{abstract}
Two popular and often applied methods to obtain two-dimensional point sets with the optimal order of $L_p$ discrepancy
are digit scrambling and symmetrization. In this paper we combine these two techniques and symmetrize
$b$-adic Hammersley point sets scrambled with arbitrary permutations. It is already known that these modifications indeed assure that the $L_p$ discrepancy is of optimal order $\mathcal{O}\left(\sqrt{\log{N}}/N\right)$ for $p\in [1,\infty)$ in contrast to the classical Hammersley point set. We prove an exact formula for the $L_2$ discrepancy of these point sets for special permutations. 
We also present the permutations which lead to the lowest $L_2$ discrepancy for every base $b\in\{2,\dots,27\}$ by employing computer search algorithms. 
\end{abstract}

\centerline{\begin{minipage}[hc]{130mm}{
{\em Keywords:} $L_2$ discrepancy, Hammersley point set, Davenport's reflection principle\\
{\em MSC 2000:} 11K06, 11K38}
\end{minipage}}

 \allowdisplaybreaks
 
\section{Introduction and statement of the result}

For a point set $\mathcal{P}=\{\bsx_0,\dots,\bsx_{N-1}\}$ with $N$ elements in the unit square $\left[0,1\right)^2$ its local discrepancy $E(x,y,\mathcal{P})$ is defined as
\[ E(x,y,\mathcal{P})=A(\left[0,x\right)\times \left[0,y\right),\mathcal{P})-Nxy \]
for $x, y \in \left(0,1\right]$. In this definition $A(\left[0,x \right)\times \left[0,y\right),\mathcal{P})$ is the number of indices $0\leq n\leq N-1$ satisfying $\bsx_n \in \left[0,x\right)\times \left[0,y\right)$.  Then the $L_p$ discrepancy of a point set $\mathcal{P}$ in $\left[0,1\right)^2$ is defined as
the $L_p$ norm of its local discrepancy divided by $N$, i.e.
\[L_p(\mathcal{P})=\frac{1}{N}\left(\int_{0}^{1}\int_{0}^{1} |E(x,y,\mathcal{P})|^p \rd x \rd y\right)^{\frac{1}{p}} \]
for $p\in[1,\infty)$. The $L_p$ discrepancy of point sets is related to the worst-case integration error of a quasi-Monte Carlo rule, see e.g. \cite{DP10,LP14,Nied92}. A well known result on the $L_p$ discrepancy is the following: for every $p\in [1,\infty)$ there exists a constant $c_p > 0$ with the 
property that for any point set $\mathcal{P}$ consisting of $N$ points in $[0, 1)^2$ we
have 
\begin{equation} \label{roth} L_p(\mathcal{P}) \geq c_p \frac{\sqrt{\log{N}}}{N}. \end{equation}
In this expression and throughout the paper, $\log$ denotes the natural logarithm.
This inequality was first shown by Roth \cite{Roth} for $p = 2$ and hence for all $p \in [2,\infty)$ and later by
Schmidt \cite{schX} for all $p\in(1,2)$. The end-point case $p=1$ was added by Hal\'{a}sz \cite{hala}.
We mention some more detailled results on the $L_2$ discrepancy. In \cite{FPPS09} it has been shown
that $$ \liminf_{N\to\infty}\inf_{\#\cP=N}\frac{NL_2(\mathcal{P})}{\sqrt{\log{N}}}\leq 0.179069\dots, $$ 
where the infimum is extended over all point sets $\cP$ with $N$ elements. This bound was obtained from digit scrambled Hammersley point sets as introduced in Definition~\ref{def1}.
Numerical results \cite{bil} suggest that symmetrized Fibonacci lattices yield a slightly better result,
such that the limes inferior could be bounded from above by $0.176006\dots$ However, this has not been strictly proven yet.
The best known lower bounds on the $L_2$ discrepancy were recently given in \cite{HL15}. We have
$$ \inf_{N\geq 2}\inf_{\#\cP=N}\frac{NL_2(\mathcal{P})}{\sqrt{\log{N}}}\geq 0.0515599\dots $$
and
$$ \limsup_{N\to\infty}\inf_{\#\cP=N}\frac{NL_2(\mathcal{P})}{\sqrt{\log{N}}}\geq 0.0610739\dots $$ 
In this paper, we consider generalized Hammersley point sets scrambled with arbitrary permutations and a symmetrized version thereof.

\begin{definition} \rm \label{def1} Let $b\geq 2$ and $n \geq 1$ be integers. 
Let $\mathfrak{S}_b$ be the set of all permutations of $\{0,1,\dots,b-1\}$ and $\Sigma=(\sigma_i)_{i=0}^{n-1} \in \mathfrak{S}_b^n$. We define the digit scrambled Hammersley point set associated to $\Sigma$ consisting of $N=b^n$ elements by
$$ \cR_{b,n}^{\Sigma}:=\left\{ \left(\sum_{i=0}^{n-1}\frac{\sigma_{n-1-i}(a_{n-1-i})}{b^{i+1}},\sum_{i=0}^{n-1}\frac{a_i}{b^{i+1}}\right): a_0,\dots,a_{n-1} \in \{0,1,\dots,b-1\}\right\}. $$
The choice $\Sigma=(\id)_{i=0}^{n-1}$, where $\id$ is the identity, yields the classical Hammersley point set in base $b$. Let $\tau \in \mathfrak{S}_b$ be given by $\tau(k)=b-1-k$ for $k\in \{0,1,\dots,b-1\}$.
In this paper we assume that for a fixed $\sigma \in \mathfrak{S}_b$ we have either $\sigma_i=\sigma$ or $\sigma_i=\tau \circ \sigma =: \overline{\sigma}$ for all $i \in \{0,\dots,n-1\}$, i.e. $\Sigma \in \{\sigma,\overline{\sigma}\}^n$. We define the number \begin{equation} \label{ln} l=l(\Sigma):=|\{i\in \{0,\dots,n-1\}: \sigma_i=\sigma\}|, \end{equation}
i. e. the number of components $\sigma_i$ of $\Sigma$ which equal $\sigma$. \\
Let $\sigma \in \mathfrak{S}_b$ and $\Sigma=(\sigma_i)_{i=0}^{n-1} \in \{\sigma,\overline{\sigma}\}^n$ be fixed. We put $\Sigma^{\ast}=(\sigma_i^{\ast})_{i=0}^{n-1} \in \{\sigma,\overline{\sigma}\}^n$, where $\sigma_i^{\ast}=\tau\circ\sigma_i$ for all $i \in \{0,\dots,n-1\}$. The symmetrized Hammersley point set (associated to $\Sigma$) consisting of $2b^n$ elements is then defined as 
   \begin{equation*}  \cR_{b,n}^{\Sigma, \mathrm{sym}}=\cR_{b,n}^{\Sigma}\cup \cR_{b,n}^{\Sigma^{\ast}}.\end{equation*}
We speak of a symmetrized point set, because $\cR_{b,n}^{\Sigma, \mathrm{sym}}$ can also be written as 
\begin{equation*} \cR_{b,n}^{\Sigma, \mathrm{sym}}=\cR_{b,n}^{\Sigma}\cup \left\{ \left(1-\frac{1}{b^n}-x,y\right) \, : \, (x,y) \in \cR_{b,n}^{\Sigma} \right\}. \end{equation*}
We also introduce the set $\mathcal{A}_b(\tau)=\{\sigma\in\mathfrak{S}_b:\, \sigma \circ \tau=\tau \circ \sigma\}$.
\end{definition}

We briefly survey several previous results on these point sets. 
The process of symmetrization and digit scrambling of point sets has been applied in discrepancy theory many times before. This is due to the fact
that the classical Hammersley point set fails to have optimal $L_p$-discrepancy for all $p\in[1,\infty)$, see e.g. \cite{FP}. The first two-dimensional point set with the optimal order of $L_2$-discrepancy was indeed found within symmetrized point sets by Davenport \cite{daven} in 1956. Halton and Zaremba \cite{HZ} introduced digit scrambling for the dyadic Hammersley point set in 1969 and showed that the modified point sets overcome the defect of the classical Hammersley point set and achieve an optimal $L_2$-discrepancy in the sense of \eqref{roth}. \\
The digit scrambled Hammersley point sets were studied further in several papers. We mention \cite{FP},
where only the case $\Sigma\in\{\id,\tau\}^n$ was considered. The results in this paper show that the $L_p$ discrepancy of the classical Hammersley point set is only of order $\mathcal{O}((\log{N})/N)$  for all $p\in[1,\infty)$ in all bases $b\geq 2$. However, the authors could also prove the existence of a $\Sigma\in\{\id,\tau\}^n$ such that $L_p(\cR_{b,n}^{\Sigma})=\mathcal{O}(\sqrt{\log{N}}/N)$ for all even positive integers $p$ and found an exact formula for the $L_2$ discrepancy of $\cR_{b,n}^{\Sigma}$ for an arbitrary $\Sigma\in\{\id,\tau\}^n$. The $L_2$ discrepancy was also studied in the general
setting $\Sigma\in\{\sigma,\overline{\sigma}\}^n$ for an arbitrary $\sigma\in\mathfrak{S}_b$ in \cite{FPPS09}. The authors obtained
$$ (b^n\, L_2(\cR_{b,n}^{\Sigma}))^2=\big(\Phi_b^{\sigma}\big)^2((n-2l)^2-n)+\mathcal{O}(n), $$
where $l$ is as in \eqref{ln} and where 
\begin{equation} \label{phicond} \Phi_b^{\sigma}=\frac{1}{b^2} \sum_{k=0}^{b-1}\sigma(k)k-\frac1b \left(\frac{b-1}{2}\right)^2.\end{equation} 
It follows from this formula that $ L_2(\cR_{b,n}^{\Sigma})=\mathcal{O}(\sqrt{\log{N}}/N) $
if and only if $|n-2l|=\mathcal{O}(\sqrt{n})$ or $\frac{1}{b} \sum_{k=0}^{b-1}\sigma(k)k=\left(\frac{b-1}{2}\right)^2$. In \cite{Kri1} it was shown that the same conditions are also sufficient and necessary for $L_p(\cR_{b,n}^{\Sigma})=\mathcal{O}(\sqrt{\log{N}}/N)$ for all $p\in [1,\infty)$. \\
There are also some known facts on the symmetrized point sets as introduced in Definition~\ref{def1}. The optimal order for the $L_2$ discrepancy of symmetrized generalized Hammersley point sets has already been obtained in \cite{Proi} as a corollary of results on the diaphony of generalized van der Corput sequences and later  in base $2$ as a special case of discrepancy estimates of so-called $(0,m,2)$-nets in \cite{LP01} with the aid of Walsh functions. In \cite{Kri1} it was proven that independently of $\Sigma\in\{\sigma,\overline{\sigma}\}^n$ the point set $\cR_{b,n}^{\Sigma,\sym}$ always achieves an $L_p$ discrepancy of order $\sqrt{\log{N}}/N$ for all bases $b\geq 2$ and for all $p\in[1,\infty).$ The proof requires tools from harmonic analysis, which have the drawback that they do not deliver exact formulas for the $L_p$ discrepancy. However,
an exact formula for the $L_2$ discrepancy of $\cR_{2,n}^{\Sigma,\sym}$ with an arbitrary $\Sigma\in\{\id,\tau\}^n$ was shown recently in \cite{Kri2}. We have
\begin{equation} \label{basis2} (2^{n+1}L_2(\cR_{2,n}^{\Sigma,\sym}))^2=\frac{n}{24}+\frac{11}{8}+\frac{1}{2^{n}}-\frac{1}{9\cdot 2^{2n+1}}. \end{equation}
This result demonstrates that in fact the $L_2$ discrepancy does not depend on $\Sigma$ at all, but only on the parameter $n$ which is connected
to the number of elements $N$ of $\cR_{2,n}^{\Sigma,\sym}$ via $N=2^{n+1}$.
The aim of this paper is to generalize \eqref{basis2} to arbitrary bases. We therefore prove the subsequent Theorem~\ref{theo}, which gives an exact formula for the $L_2$ discrepancy of $\cR_{b,n}^{\Sigma,\sym}$ for an arbitrary $\Sigma\in\{\sigma,\osigma\}^n$ with $\sigma\in\mathcal{A}_b(\tau)$. To this end, we need some notation that was initially introduced by Faure in \cite{Fau81}.

\begin{definition} \rm \label{deffaure} Let $\sigma\in \mathfrak{S}_b$ and let $\mathcal{Z}_b^{\sigma}=(\sigma(0)/b,\sigma(1)/b,\dots,\sigma(b-1)/b)$.
For $h\in\{0,1,\dots,b-1\}$ and $x\in[(k-1)/b,k/b)$, where $k\in \{1,\dots,b\}$, we define
\begin{equation*}
  \varphi_{b,h}^{\sigma}(x):= \begin{cases}
	                            A([0,h/b);k;\mathcal{Z}_b^{\sigma})-hx & \mbox{if } 0\leq h \leq \sigma(k-1), \\
															(b-h)x-A([h/b,1);k;\mathcal{Z}_b^{\sigma}) & \mbox{if } \sigma(k-1)< h <b .
	                        \end{cases}\end{equation*}
	In this definition, for a sequence $X=(x_M)_{M\geq 1}$, $A([x,y);N;X)$ denotes the number of indices $M$ with $1\leq M \leq N$ such that $x_M \in [x,y)$. The function $\varphi_{b,h}^{\sigma}$ is extended to the reals by periodicity, i.e. we have $\varphi_{b,h}^{\sigma}(x)=\varphi_{b,h}^{\sigma}(\{x\})$ for all $x\in\RR$. The notation $\{\cdot\}$ means the fractional part of an $x\in\RR$. We note that $\varphi_{b,0}^{\sigma}=0$ for any $\sigma$ and that $\varphi_{b,h}^{\sigma}(0)=0$ for any $\sigma$ and any $h$. We define some other functions which will appear in diverse parts of this paper.
	First, we put 
	$$ \varphi_b^{\sigma}:=\sum_{h=0}^{b-1}\varphi_{b,h}^{\sigma} \mbox{\quad and \quad} \varphi_b^{\sigma,(2)}:=\sum_{h=0}^{b-1}(\varphi_{b,h}^{\sigma})^2.$$
	We also set
	$$ \widetilde{\varphi}_b^{\sigma}:=\sum_{h=0}^{b-1}\varphi_{b,h}^{\sigma}\varphi_{b,h}^{\overline{\sigma}}, \quad \widetilde{\varphi}_{b,1}^{\sigma}:=\sum_{h=0}^{b-2}\varphi_{b,h+1}^{\sigma}\varphi_{b,h}^{\overline{\sigma}} \mbox{\quad and \quad} \widetilde{\varphi}_{b,2}^{\sigma}:=\sum_{h=0}^{b-2}\varphi_{b,h}^{\sigma}\varphi_{b,h+1}^{\overline{\sigma}}. $$
Finally, we define $\Phi_b^{\sigma}:=\frac1b \int_0^1 \varphi_b^{\sigma}(x)\rd x$ and analogously for $\Phi_b^{\sigma,(2)}$, $\widetilde{\Phi}_b^{\sigma}$, $\widetilde{\Phi}_{b,1}^{\sigma}$ and $\widetilde{\Phi}_{b,2}^{\sigma}$. The number $\Phi_b^{\sigma}$ is the same as given in \eqref{phicond} as it was shown in \cite[Lemma 5]{FPPS09}. 
\end{definition}

Now we are ready to state the main result of this paper.
\begin{theorem} \label{theo} Let $\sigma\in\mathcal{A}_b(\tau)$, $n\in\NN$ and $\Sigma\in\{\sigma,\overline{\sigma}\}^n$. Then we have
 \begin{align*}
    \left(2b^n\, L_2(\cR_{b,n}^{\Sigma,\sym})\right)^2=&\,nc_b^{\sigma}+\frac{11}{8}+\frac{1}{b^n}+\frac{1-9\cdot (-1)^b}{144b^{2n}},
 \end{align*}
 where $c_b^{\sigma}:=2\Phi_b^{\sigma,(2)}+\widetilde{\Phi}_b^{\sigma}+\frac12 \widetilde{\Phi}_{b,1}^{\sigma}+\frac12 \widetilde{\Phi}_{b,2}^{\sigma}$.
\end{theorem}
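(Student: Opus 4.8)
The plan is to reduce the $L_2$ discrepancy of the symmetrized set to quantities attached to a single scrambled Hammersley set and its conjugate. Since $\cR_{b,n}^{\Sigma,\sym}=\cR_{b,n}^{\Sigma}\cup\cR_{b,n}^{\Sigma^{\ast}}$ is a disjoint union of two $b^n$-point sets sharing the same linear term $b^nxy$, the local discrepancy splits additively as $E(x,y,\cR_{b,n}^{\Sigma,\sym})=E(x,y,\cR_{b,n}^{\Sigma})+E(x,y,\cR_{b,n}^{\Sigma^{\ast}})$. Expanding the square under the integral then gives
\[
\left(2b^n L_2(\cR_{b,n}^{\Sigma,\sym})\right)^2=\int_0^1\!\!\int_0^1\Big(E(x,y,\cR_{b,n}^{\Sigma})^2+2E(x,y,\cR_{b,n}^{\Sigma})E(x,y,\cR_{b,n}^{\Sigma^{\ast}})+E(x,y,\cR_{b,n}^{\Sigma^{\ast}})^2\Big)\rd x\rd y,
\]
so it suffices to compute one self-integral (for $\Sigma$, and by the identical computation for $\Sigma^{\ast}$) together with one cross-integral between $\Sigma$ and $\Sigma^{\ast}$.

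The engine of the computation is the explicit expansion of the local discrepancy of a scrambled Hammersley set in terms of Faure's functions $\varphi_{b,h}^{\sigma_j}$. Following Faure and the cited works, I would write $E(x,y,\cR_{b,n}^{\Sigma})$ as a sum over the digit levels $j=0,\dots,n-1$ of contributions built from $\varphi_{b,h}^{\sigma_j}$ evaluated at a scaled copy of $x$, paired with the $b$-adic digit functions of $y$; since the $y$-factors at distinct levels are supported on nested $b$-adic intervals, they are essentially orthogonal on $[0,1)$. Integrating the products in the display above therefore collapses to a sum of same-level terms and nearest-neighbour (adjacent-level) terms. The same-level self-contributions produce the integrals defining $\Phi_b^{\sigma,(2)}$ (together with $\Phi_b^{\osigma,(2)}=\Phi_b^{\sigma,(2)}$, which I would deduce from $\sigma\in\mathcal{A}_b(\tau)$), summed over the $n$ levels and over both $\Sigma$ and $\Sigma^{\ast}$, which accounts for the coefficient $2\Phi_b^{\sigma,(2)}$; the same-level and adjacent-level cross-contributions between $\Sigma$ and $\Sigma^{\ast}$ produce exactly $\widetilde{\Phi}_b^{\sigma}$, $\widetilde{\Phi}_{b,1}^{\sigma}$ and $\widetilde{\Phi}_{b,2}^{\sigma}$, assembling the full coefficient $c_b^{\sigma}$ of $n$.

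The decisive point is the cancellation of all $\Sigma$-dependence. In the non-symmetrized self-integral the leading contribution is governed by $(\Phi_b^{\sigma})^2(n-2l)^2$ with $l=l(\Sigma)$, and the same term appears for $\Sigma^{\ast}$ because passing to $\Sigma^{\ast}$ replaces $l$ by $n-l$ and leaves $(n-2l)^2$ unchanged. I would show that the cross-integral contributes the opposite quantity $-2(\Phi_b^{\sigma})^2(n-2l)^2$, so that these terms annihilate; here the identity $\Phi_b^{\osigma}=-\Phi_b^{\sigma}$, which follows directly from $\osigma(k)=b-1-\sigma(k)$ and the definition \eqref{phicond}, together with the symmetry relations between $\varphi_{b,h}^{\sigma}$ and $\varphi_{b,h}^{\osigma}$ forced by $\sigma\circ\tau=\tau\circ\sigma$, is exactly what drives the cancellation. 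What survives is the level-count $n\,c_b^{\sigma}$ plus purely numerical pieces. Evaluating these remaining terms — the boundary effects coming from integration against the $y$-digit functions and the constant main term — yields $\tfrac{11}{8}+b^{-n}+\tfrac{1-9(-1)^b}{144\,b^{2n}}$. The parity factor $(-1)^b$ enters at the $b^{-2n}$ level because the commuting condition pairs each symbol $k$ with $b-1-k$: when $b$ is even these pairs exhaust $\{0,\dots,b-1\}$, whereas when $b$ is odd the middle symbol $(b-1)/2$ is a common fixed point of $\tau$ and $\sigma$, and this extra fixed symbol shifts the corresponding corner sum.

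I expect the main obstacle to be the exact bookkeeping of the low-order ($b^{-n}$ and $b^{-2n}$) contributions together with the adjacent-level cross terms, rather than the leading asymptotics. Getting the constant $\tfrac{11}{8}$ and the parity-dependent coefficient right requires tracking the values of the Faure functions at the edges of the $b$-adic intervals and verifying that the hypothesis $\sigma\in\mathcal{A}_b(\tau)$ makes the off-diagonal $y$-interactions reduce precisely to the shifted sums $\widetilde{\Phi}_{b,1}^{\sigma}$ and $\widetilde{\Phi}_{b,2}^{\sigma}$ and nothing more. This is the step where the generalization from base $2$, in which $\varphi_{b,h}^{\sigma}$ and $\varphi_{b,h}^{\osigma}$ largely degenerate, to general $b$ is genuinely harder.
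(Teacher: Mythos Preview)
Your high-level decomposition---splitting $E_{\sym}$ as $E_1+E_2$, expanding the square, and quoting the known exact formula for the two self-integrals---matches the paper exactly. The cancellation of the $l$-dependent $(\Phi_b^{\sigma})^2$ terms between the self-integrals and the cross-integral is also correctly anticipated.

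However, the mechanism you propose for the cross-integral is wrong in a way that would block the computation. In the Faure expansion of Lemma~\ref{allgemein} the level-$j$ summand is $\varphi_{b,\varepsilon_j(\lambda,N,\Sigma)}^{\sigma_{j-1}}(N/b^j)$, a $1$-periodic function of $N/b^j$; these are \emph{not} supported on nested $b$-adic intervals and are \emph{not} orthogonal across levels. The off-diagonal products $i\neq j$ do not vanish: by Lemma~\ref{previous} they contribute exactly the $(\Phi_b^{\sigma})^2$ terms, and it is the algebraic cancellation between these contributions (from $S_1,\dots,S_4$ in the paper) that removes the $l$-dependence---not any orthogonality.

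More seriously, you attribute $\widetilde{\Phi}_{b,1}^{\sigma}$ and $\widetilde{\Phi}_{b,2}^{\sigma}$ to ``adjacent-level'' interactions. In fact all three tilde quantities arise from the \emph{same} level $j$ in the cross-integral. The point is that for fixed $j$ the indices $\varepsilon_j(\lambda,N,\Sigma)$ and $\varepsilon_j(\lambda,N,\Sigma^{\ast})$ can differ by $0$ or $\pm 1$ (because $\nu_j(N,\Sigma^{\ast})=b^{n-j}-1-\nu_j(N,\Sigma)$), and counting how many $\lambda$ realise each of these three possibilities is what produces the weighted combination of $\widetilde{\varphi}_b^{\sigma}$, $\widetilde{\varphi}_{b,1}^{\sigma}$, $\widetilde{\varphi}_{b,2}^{\sigma}$. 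This combinatorial step (Lemma~\ref{ugly}) and the subsequent summation over $N$ (Lemmas~\ref{id}--\ref{main}) are the genuine core of the proof, and your outline contains no substitute for them. Without this analysis you cannot explain why only the shifted sums with shift $\pm1$ in $h$ appear, nor obtain the constants $\tfrac{11}{8}$ and $\tfrac{1-9(-1)^b}{144}$.
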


\begin{remark} \rm \label{arrang} Theorem~\ref{theo} demonstrates that $L_2(\cR_{b,n}^{\Sigma,\sym})$ does not depend on the distribution of $\sigma$ and $\osigma$ in $\Sigma\in\{\sigma,\overline{\sigma}\}^n$ at all, but only on the base $b$, on the permutation $\sigma\in\mathcal{A}_b(\tau)$ we choose and on the number of elements $N=2b^n$. Hence, for a fixed $\sigma\in\mathcal{A}_b(\tau)$
one should always choose $\Sigma=(\sigma,\sigma,\dots,\sigma)$ and $\Sigma^{\ast}=(\overline{\sigma},\overline{\sigma},\dots,\overline{\sigma})$ if one is only interested in a low $L_2$ discrepancy of $\cR_{b,n}^{\Sigma,\sym}$.
\end{remark}

\begin{example} \rm \label{example} We would like to derive results for the simplest case $\sigma=\id$. Let $\Sigma\in\{\id,\tau\}^n$ for some $n\in\NN$. 
From Lemma~\ref{altern} in Section~\ref{numerical} we derive the formula
$$\left(2b^n\, L_{2}(\cR_{b,n}^{\Sigma,\sym})\right)^2=n\left(\frac{b^2}{360}+\frac{1}{24}-\frac{2}{45b^2}\right)+\frac{11}{8}+\frac{1}{b^n}+\frac{1-9\cdot(-1)^b}{144b^{2n}}.$$
 We remark that for $b=2$ this formula recovers \eqref{basis2}.
From \cite[Corollary 4]{FP} we have
$$ \min_{\Sigma\in\{\id,\tau\}^n}\left(b^n\,L_2(\cR_{b,n}^{\Sigma})\right)^2=n\left(\frac{b^2}{240}+\frac{1}{72}-\frac{13}{720b^2}\right)+\mathcal{O}(1). $$
This means that in the case $\Sigma\in\{\id,\tau\}^n$, symmetrizing yields asymptotically a lower $L_2$ discrepancy than digit scrambling for $b\geq 5$.
\end{example}
The structure of this paper is as follows: Section~\ref{proof} is divided into two subsections: In the first one we present the basic ideas for the proof of Theorem~\ref{theo} and defer some technical auxiliary results to the second subsection. In Section~\ref{numerical}, we present the numerical results and outline the methods and algorithms we used
to obtain them. In the final Section~\ref{conc} we point out the essential conclusions from our results. We would like to mention that our proof relies strongly on methods developed and used in the papers \cite{Fau81, FP, FPPS09}, amongst others.

\section{Proof of Theorem~\ref{theo}} \label{proof}

The formalism we use to verify Theorem~\ref{theo} is rather complicated and leads to several technical proofs. We therefore
would like to proceed in the following way: In the subsequent subsection, we present the high level structure of the proof, where
we try to avoid as many technicalities as possible. This subsection gives the reader the basic idea of the proof. We refer those who would like to fully understand all the details to Subsection~\ref{detail}.

\subsection{The basic steps of the proof}

The basic ingredient of our proof is an exact formula for the local discrepancy of $\cR_{b,n}^{\Sigma}$ (\cite[Lemma 1]{FP}), which goes back to H. Faure. 

\begin{lemma} \label{allgemein} For integers $1\leq \lambda,N \leq b^n$ we have
   $$ E\left(\frac{\lambda}{b^n},\frac{N}{b^n},\cR_{b,n}^{\Sigma}\right)=\sum_{j=1}^{n}\varphi_{b,\varepsilon_j(\lambda,N,\Sigma)}^{\sigma_{j-1}}\left(\frac{N}{b^j}\right). $$
	The numbers $\varepsilon_j(\lambda,N,\Sigma)$ for $j\in \{1,2,\dots,n\}$ are given as follows: For $1\leq \lambda <b^n$ with $b$-adic expansion
	$\lambda=\lambda_1b^{n-1}+\lambda_2b^{n-2}+\dots+\lambda_{n-1}b+\lambda_n$, we define
	$$ \Lambda_{j-1}=\Lambda_{j-1}(\lambda)=\lambda_jb^{n-j}+\dots\lambda_n. $$
	Then, for $1\leq N< b^n$ with $b$-adic expansion $N=N_{n-1}b^{n-1}+\dots+N_0$, we define
	$$ \nu_j=\nu_j(N,\Sigma)=\sigma_j(N_j)b^{n-j-1}+\dots+\sigma_{n-2}(N_{n-2})b+\sigma_{n-1}(N_{n-1}). $$
	Now we set $\varepsilon_n=\lambda_n$ and for fixed $1\leq j \leq n-1$ we set
	\[ \varepsilon_j=\varepsilon_j(\lambda,N,\Sigma)=\begin{cases}
	                                    0 & \mbox{if } 0\leq \Lambda_{j-1}\leq \nu_j, \\
																			h & \mbox{if } \nu_j+(h-1)b^{n-j}< \Lambda_{j-1}\leq \nu_j+hb^{n-j} \mbox { for } 1\leq h<b, \\
																			0 & \mbox{if } \nu_j+(b-1)b^{n-j}< \Lambda_{j-1}< b^{n-j+1}.
	                                 \end{cases}
	\]
For $\lambda=b^n$ or $N=b^n$ we set $\varepsilon_j(\lambda,N,\Sigma)=0$ for all $1\leq j \leq n$.
\end{lemma}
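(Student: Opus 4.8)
The plan is to establish the formula by a direct, scale-by-scale counting argument that turns the two-dimensional box count into a superposition of one-dimensional discrepancies of generalized van der Corput type, each of which is precisely a Faure function from Definition~\ref{deffaure}. The first step is a reduction to one dimension. Since the map $(a_0,\dots,a_{n-1})\mapsto\sum_{i=0}^{n-1}a_i/b^{i+1}$ is a bijection from the digit tuples onto $\{0,1/b^n,\dots,(b^n-1)/b^n\}$, the condition $y<N/b^n$ selects exactly the points indexed by $m\in\{0,1,\dots,N-1\}$, where $m=a_0b^{n-1}+\dots+a_{n-1}$, and each such $m$ determines a single first coordinate $x_m$ obtained by digit reversal and application of the permutations $\sigma_i$. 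Hence
$$ E\left(\frac{\lambda}{b^n},\frac{N}{b^n},\cR_{b,n}^{\Sigma}\right)=\#\{0\le m<N:\,x_m<\lambda/b^n\}-\frac{\lambda N}{b^n}. $$

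Next I would split the index range $\{0,\dots,N-1\}$ into the elementary $b$-adic blocks dictated by the base-$b$ expansion of $N$: for each level $j$ one collects those $m$ whose high-order digits agree with $N$ down to a fixed position and whose digit in the next position is strictly smaller, the remaining digits being free. On such a block the free digits make the leading part of $x_m$ run through a complete residue system, while the single constrained digit runs through an initial segment of the permuted sequence $\mathcal{Z}_b^{\sigma_{j-1}}$; counting how many of the resulting values fall below $\lambda/b^n$ and subtracting the corresponding share of $\lambda N/b^n$ yields exactly a one-dimensional generalized van der Corput discrepancy at scale $b^j$. Matching this against Definition~\ref{deffaure} identifies the block's contribution as $\varphi_{b,\varepsilon_j}^{\sigma_{j-1}}(N/b^j)$, with the relevant index $k-1=N_{j-1}$ read off from $\{N/b^j\}$, and summing over the $n$ levels produces the asserted formula.

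The heart of the proof, and its main obstacle, is the correct determination of the effective threshold digit $\varepsilon_j$ together with the verification of the per-level identity. The digit $\varepsilon_j$ arises from locating the tail $\Lambda_{j-1}$ of $\lambda$ relative to the permuted tail $\nu_j$ of $N$: the three-case rule in the statement is precisely the assertion of which half-open subinterval of length $b^{n-j}$ contains $\Lambda_{j-1}$, with the two extreme cases forcing $\varepsilon_j=0$ and hence a vanishing Faure function. Checking that this selection is compatible with the two-case definition of $\varphi_{b,h}^{\sigma}$ — the split at $h=\sigma(k-1)$ between counting points inside versus outside $[0,h/b)$ — while keeping track of the digit reversal between the first and second coordinates, is the delicate bookkeeping that has to be carried out carefully. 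An alternative organization is an induction on $n$ that peels off the most significant digit of $N$ and $\lambda$, uses the self-similarity of the construction to split off the level-$n$ term, and reduces to the case $n-1$; the base case $n=1$ is the direct computation $\#\{a<N:\sigma_0(a)<\lambda\}-\lambda N/b=\varphi_{b,\lambda}^{\sigma_0}(N/b)$, which already exhibits the two-case mechanism. One may also transfer the result from Faure's known one-dimensional discrepancy formula for generalized van der Corput sequences, which is the route suggested by the attribution to \cite{Fau81}.
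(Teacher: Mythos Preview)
The paper does not give its own proof of this lemma: it is quoted verbatim as \cite[Lemma~1]{FP} and attributed ultimately to Faure~\cite{Fau81}, so there is no in-paper argument to compare against line by line. Your outline is the standard route used in \cite{Fau81,FP}: reduce to one dimension via the bijection on the second coordinate, decompose $\{0,\dots,N-1\}$ into $b$-adic blocks according to the base-$b$ digits of $N$, and recognise each block contribution as a value of a Faure function $\varphi_{b,h}^{\sigma_{j-1}}$; the induction on $n$ and the base case $\#\{a<N:\sigma_0(a)<\lambda\}-\lambda N/b=\varphi_{b,\lambda}^{\sigma_0}(N/b)$ are both correct organising principles. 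So at the level of strategy your proposal matches the literature proof.

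What is missing is the part you yourself call ``delicate bookkeeping'': you never actually carry out the identification of $\varepsilon_j$. Concretely, the digit reversal sends the \emph{low}-order digits of $m$ (which are free on a level-$j$ block) to the \emph{high}-order digits of $x_m$, so on such a block the values $b^n x_m$ form a complete set of residues modulo $b^{n-j}$ shifted by a fixed offset determined by $N_j,\dots,N_{n-1}$ through the permutations; that offset is exactly $\nu_j(N,\Sigma)$, and comparing the tail $\Lambda_{j-1}(\lambda)$ against $\nu_j,\nu_j+b^{n-j},\dots,\nu_j+(b-1)b^{n-j}$ is what produces the three-case rule for $\varepsilon_j$. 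You describe this correctly in words but do not verify it, nor do you check that the two cases in Definition~\ref{deffaure} (split at $h=\sigma(k-1)$) line up with the count on the block. This is precisely the computation done in \cite[Lemma~1]{FP}; your sketch is a faithful plan for it, but as written it is a plan rather than a proof.
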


\begin{remark} \label{rem1} \rm Since the components of all points in $\cR_{b,n}^{\Sigma}$ are of the form $m/b^n$ for some $m\in\{0,\dots,b^n-1\}$, we have 
$$ E(x,y,\cR_{b,n}^{\Sigma})=E(x(n),y(n),\cR_{b,n}^{\Sigma})+b^n(x(n)y(n)-xy)   $$
for all $x,y\in (0,1]$, where we set $x(n):=\min\{m/b^n\geq x:\, m\in\{1,\dots,b^n\}\}$ for an $x\in(0,1]$. This relation has already been remarked in
\cite[Remark 3]{FP}.
\end{remark}

Throughout this paper, we write $E_1(x,y)$ for the local discrepancy of $\cR_{b,n}^{\Sigma}$, $E_2(x,y)$ for the local discrepancy of $\cR_{b,n}^{\Sigma^{\ast}}$ and $E_{\sym}(x,y)$ for the local discrepancy of $\cR_{b,n}^{\Sigma,\sym}$. The next lemma provides a formula for $E_{\sym}(x,y)$. This formula follows immediately from the fact that $\cR_{b,n}^{\Sigma,\sym}$ is defined as the union of $\cR_{b,n}^{\Sigma}$ and $\cR_{b,n}^{\Sigma^{\ast}}$. For the simple, elementary proof we refer to \cite[Lemma 2]{Kri2}.

\begin{lemma} \label{symrelation} We have $E_{\sym}(x,y)=E_1(x,y)+E_2(x,y)$ for all $x,y\in (0,1]$.
\end{lemma}
Now we can start with the proof. 
 With the definition of the $L_2$ discrepancy and with Lemma~\ref{symrelation} we obtain
\begin{align} (2b^nL_2(\cR_{b,n}^{\Sigma,\sym}))^2=& \int_{0}^{1} \int_{0}^{1}  (E_{\sym}(x,y))^2 \rd x \rd y \nonumber \\ \nonumber
 =&\int_{0}^{1} \int_{0}^{1}  (E_1(x,y))^2 \rd x \rd y+\int_{0}^{1} \int_{0}^{1}  (E_2(x,y))^2 \rd x \rd y\\ \nonumber
 &+2\int_{0}^{1} \int_{0}^{1}  E_1(x,y)E_2(x,y) \rd x\rd y\\ \nonumber
 =&(b^n L_2(\cR_{b,n}^{\Sigma}))^2+(b^n L_2(\cR_{b,n}^{\Sigma^{\ast}}))^2 \\ \label{letztesintegral}
&+2\int_{0}^{1} \int_{0}^{1}  E_1(x,y)E_2(x,y) \rd x \rd y.
\end{align}
At this point, we make use of a previous result on the $L_2$ discrepancy of $\cR_{b,n}^{\Sigma}$.
In case that $\sigma\in \mathcal{A}_b(\tau)$ the authors of \cite{FPPS09} could show a completely exact formula for this quantity. We recall the
definition $l=l(\Sigma)=|\{i\in \{0,\dots,n-1\}: \sigma_i=\sigma\}|$ as given in \eqref{ln}. The following result is \cite[Theorem 2]{FPPS09}.

\begin{lemma}[Faure et. al.] \label{exact} Let $\sigma\in \mathcal{A}_b(\tau)$ and $\Sigma\in\{\sigma,\osigma\}^n$. Then we have
\begin{align*}
    (b^n\, L_2(\cR_{b,n}^{\Sigma}))^2=&\big(\Phi_b^{\sigma}\big)^2((n-2l)^2-n)+\Phi_b^{\sigma}\left(1-\frac{1}{2b^n}\right)(2l-n) \\
              &+ n\Phi_b^{\sigma,(2)}+\frac38+\frac{1}{4b^n}-\frac{1}{72b^{2n}}.
\end{align*}
\end{lemma}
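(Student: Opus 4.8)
The plan is to reduce the continuous $L_2$ integral to finite sums over the $b^n\times b^n$ grid, where the exact local-discrepancy formula of Lemma~\ref{allgemein} applies. Concretely, I would start from $(b^nL_2(\cR_{b,n}^\Sigma))^2=\int_0^1\int_0^1 E(x,y,\cR_{b,n}^\Sigma)^2\rd x\rd y$ and use Remark~\ref{rem1}: on the cell $[(\lambda-1)/b^n,\lambda/b^n)\times[(N-1)/b^n,N/b^n)$ one has $E(x,y)=E(\lambda/b^n,N/b^n)+b^n(\lambda N/b^{2n}-xy)$. Squaring and integrating cell by cell splits the answer into a main term $b^{-2n}\sum_{\lambda,N}E(\lambda/b^n,N/b^n)^2$, a cross term pairing the grid discrepancy with the polynomial correction, and a purely polynomial term. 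The last two reduce to elementary integrals together with the sums $\sum_{\lambda,N}E(\lambda/b^n,N/b^n)$ and its first moments, and these will ultimately supply the constant $\tfrac38$, the factor $1-\tfrac1{2b^n}$, and the lower-order pieces $\tfrac1{4b^n}-\tfrac1{72b^{2n}}$.

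The core is the main term. Inserting Lemma~\ref{allgemein} gives $E(\lambda/b^n,N/b^n)=\sum_{j=1}^n\varphi_{b,\varepsilon_j}^{\sigma_{j-1}}(N/b^j)$, so squaring produces diagonal terms $(\varphi_{b,\varepsilon_j}^{\sigma_{j-1}})^2$ and off-diagonal products $\varphi_{b,\varepsilon_j}^{\sigma_{j-1}}\varphi_{b,\varepsilon_k}^{\sigma_{k-1}}$. The decisive observation is that for fixed $N$, as $\lambda$ runs through all $b^n$ values, the index $\varepsilon_j$ takes each value in $\{0,\dots,b-1\}$ exactly $b^{n-1}$ times, and for $j\ne k$ the pair $(\varepsilon_j,\varepsilon_k)$ is equidistributed over $\{0,\dots,b-1\}^2$. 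I would prove this from the definition in Lemma~\ref{allgemein}: $\varepsilon_j$ depends only on $\Lambda_{j-1}$ and selects a block of length $b^{n-j}$ relative to $\nu_j$ (with $\varepsilon_j=0$ corresponding to the two complementary blocks of total length $b^{n-j}$), while $\Lambda_{k-1}=\Lambda_{j-1}\bmod b^{n-k+1}$ sweeps its residues uniformly over any block whose length is a multiple of $b^{n-k+1}$. Summing over $\lambda$ therefore collapses the diagonal contribution of level $j$ to $b^{n-1}\varphi_b^{\sigma_{j-1},(2)}(N/b^j)$ and each off-diagonal pair to $b^{n-2}\varphi_b^{\sigma_{j-1}}(N/b^j)\varphi_b^{\sigma_{k-1}}(N/b^k)$, with $\varphi_b^\sigma,\varphi_b^{\sigma,(2)}$ as in Definition~\ref{deffaure} (terms with a $0$-index drop out since $\varphi_{b,0}^\sigma\equiv0$).

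Next I would carry out the $N$-sum using the $1$-periodicity of the $\varphi$-functions. For the diagonal, $\sum_N\varphi_b^{\sigma_{j-1},(2)}(N/b^j)$ reduces to a sum over one period, equal to the integral $b\Phi_b^{\sigma,(2)}$ up to a Riemann-sum correction coming from the piecewise-linearity of $\varphi$; since squaring removes the $\sigma/\osigma$ sign one has $\Phi_b^{\osigma,(2)}=\Phi_b^{\sigma,(2)}$, and every level contributes equally, yielding $n\Phi_b^{\sigma,(2)}$. For the off-diagonal products at the two scales $b^j,b^k$, grouping $N$ by its residue modulo the coarser power factorizes the period-sum into a product of averages at leading order. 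Here the hypothesis $\sigma\in\mathcal{A}_b(\tau)$ enters through a reflection identity relating $\varphi_b^{\osigma}$ to $\varphi_b^\sigma$, giving $\Phi_b^{\osigma}=-\Phi_b^\sigma$, so that level $j$ carries a sign $s_j=+1$ if $\sigma_{j-1}=\sigma$ and $s_j=-1$ if $\sigma_{j-1}=\osigma$. The identities $\sum_j s_j=2l-n$ and $\sum_{j\ne k}s_js_k=(2l-n)^2-n=(n-2l)^2-n$ then reproduce the coefficients $(\Phi_b^\sigma)^2((n-2l)^2-n)$ and, after combining with the cross term from the polynomial correction, the term $\Phi_b^\sigma(1-\tfrac1{2b^n})(2l-n)$; collecting the three groups and simplifying yields the claimed formula.

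The hard part will be the precise bookkeeping in the final step: every replacement of a sum of a piecewise-linear $\varphi$ over the grid $\{N/b^j\}$ by its integral $b\Phi_b^{\cdot}$ carries an explicit Riemann-sum correction proportional to the slopes of $\varphi$, and these corrections, together with the elementary integrals arising from the polynomial correction of Remark~\ref{rem1}, must be tracked to full $b^{-n}$ and $b^{-2n}$ order to land exactly on $1-\tfrac1{2b^n}$ and $\tfrac38+\tfrac1{4b^n}-\tfrac1{72b^{2n}}$ rather than merely up to $\mathcal{O}(1)$ or $\mathcal{O}(b^{-n})$. Establishing $\Phi_b^{\osigma}=-\Phi_b^\sigma$ from the commutation $\sigma\circ\tau=\tau\circ\sigma$ and propagating the sign correctly through the mixed-scale products is the other delicate point.
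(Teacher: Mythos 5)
First, a structural point: the paper offers no proof of Lemma~\ref{exact} at all --- it is imported verbatim as \cite[Theorem 2]{FPPS09} --- so your attempt is really a reconstruction of that source's argument. As a skeleton it is faithful: the cell-wise reduction via Remark~\ref{rem1}, the equidistribution of $\varepsilon_j$ singly and in pairs over $\lambda$ (exactly the content of the first display of Lemma~\ref{previous}), the collapse of diagonal terms to $b^{n-1}\varphi_b^{\sigma_{j-1},(2)}(N/b^j)$ and of off-diagonal pairs to $b^{n-2}\varphi_b^{\sigma_{i-1}}(N/b^i)\varphi_b^{\sigma_{j-1}}(N/b^j)$, and the sign bookkeeping via $\varphi_b^{\osigma}=-\varphi_b^{\sigma}$ producing the coefficients $2l-n$ and $(n-2l)^2-n$, are all correct and are the same toolkit this paper reuses in Section~\ref{proof} for $\Sigma_1,\dots,\Sigma_4$. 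One small inaccuracy: the mixed-scale identity $\sum_{N=1}^{b^n}\varphi_b^{\sigma}(N/b^i)\varphi_b^{\sigma}(N/b^j)=b^{n+2}\big(\Phi_b^{\sigma}\big)^2$ is an \emph{exact} identity (second display of Lemma~\ref{previous}), not merely a factorization ``at leading order''; hedging there would cost you exactly the precision the lemma asserts.

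The genuine gap is where you locate the hypothesis $\sigma\in\mathcal{A}_b(\tau)$. You claim it enters through the reflection identity giving $\Phi_b^{\osigma}=-\Phi_b^{\sigma}$; but that identity holds for \emph{every} $\sigma\in\mathfrak{S}_b$, since $\varphi_{b,h}^{\osigma}=-\varphi_{b,b-h}^{\sigma}$ is stated in Lemma~\ref{previous} with no commutation assumption. As written, your sketch therefore never actually uses $\sigma\circ\tau=\tau\circ\sigma$, and if the deferred ``bookkeeping'' were as routine as you suggest, you would have proved the exact formula for arbitrary $\sigma$ --- which is more than \cite{FPPS09} establish: for general $\sigma$ only the asymptotic version quoted in the introduction is available, and the exact formula is proved only on $\mathcal{A}_b(\tau)$. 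The hypothesis is needed precisely in the part you postpone. Note that unweighted grid sums of $\varphi_b^{\sigma}$ are exact for all $\sigma$ (it is piecewise linear with $\varphi_b^{\sigma}(0)=\varphi_b^{\sigma}(1)=0$, so the trapezoid rule has no error); the $\sigma$-dependence that must be tamed sits in the weighted sums such as $\sum_{N} N\,\varphi_b^{\sigma}(N/b^j)$, equivalently integrals like $\int_0^1 x\,\varphi_b^{\sigma}(x)\rd x$, arising from the cross term with the polynomial correction of Remark~\ref{rem1}. Only the symmetry of the $\varphi$-functions induced by commutation with $\tau$ makes these collapse into multiples of $\Phi_b^{\sigma}$ with the exact factor $1-\frac{1}{2b^n}$, rather than introducing new $\sigma$-dependent constants that would not appear in the claimed formula. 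Similarly, the piecewise-quadratic diagonal sums carry Simpson-type corrections $A_b(j,\cdot)$ (compare Lemma~\ref{generalsigma}) whose geometric summation over $j$ is what produces $\frac38+\frac{1}{4b^n}-\frac{1}{72b^{2n}}$. Without these two ingredients your argument delivers the formula only up to $\mathcal{O}(1)$, i.e.\ the already-known asymptotic statement, not the lemma.
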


Lemma~\ref{exact} yields 
\begin{align*} (b^n& L_2(\cR_{b,n}^{\Sigma}))^2+(b^n L_2(\cR_{b,n}^{\Sigma^{\ast}}))^2 \\
  &=\big(\Phi_b^{\sigma}\big)^2(2n^2+8l^2-2n-8ln)+2n\Phi_b^{\sigma,(2)}+\frac34+\frac{1}{2b^n}-\frac{1}{36b^{2n}}.
\end{align*}
Here we regarded the obvious fact that $\Sigma^{\ast}$ contains $n-l$ entries equal to $\id$ whenever $\Sigma$ contains $l$ of such entries.
We examine \eqref{letztesintegral} and therefore regard Remark~\ref{rem1} to write
\begin{align*}
    \int_{0}^{1}& \int_{0}^{1}  E_1(x,y)E_2(x,y) \rd x \rd y \\
       =& \int_0^1 \int_0^1 \left(E_1(x(n),y(n))+b^n(x(n)y(n)-xy)\right) \\
       &\times \left(E_2(x(n),y(n))+b^n(x(n)y(n)-xy)\right)\rd x \rd y \\
       =& \int_0^1 \int_0^1 E_1(x(n),y(n))E_2(x(n),y(n))\rd x \rd y \\
       &+ b^n\int_0^1 \int_0^1 E_1(x(n),y(n))(x(n)y(n)-xy)\rd x \rd y \\
       &+ b^n\int_0^1 \int_0^1 E_2(x(n),y(n))(x(n)y(n)-xy)\rd x \rd y \\
       &+ b^{2n} \int_0^1 \int_0^1 (x(n)y(n)-xy)^2 \rd x \rd y=:\Sigma_1+\Sigma_2+\Sigma_3+\Sigma_4.
\end{align*}
From the proof of \cite[Theorem 2]{FPPS09} we already know that
$$ \Sigma_4=\frac{25}{72}+\frac{1}{4b^n}+\frac{1}{72b^{2n}} $$
and
$$ \Sigma_2=(2l-n) \left(\frac12-\frac{1}{4b^n}\right)\Phi_b^{\sigma}. $$
By replacing $l$ by $n-l$ in the result for $\Sigma_2$ we obtain
$$ \Sigma_3=(2(n-l)-n) \left(\frac12-\frac{1}{4b^n}\right)\Phi_b^{\sigma} $$
and therefore $\Sigma_2+\Sigma_3=0$.
It remains to evaluate $\Sigma_1$. In the following, we do nothing else but inserting Lemma~\ref{allgemein} for $E_1(x,y)$ and $E_2(x,y)$, and then separating those indices $i\in\{1,\dots,n\}$ where $\sigma_{i-1}=\sigma$ from those where $\sigma_{i-1}=\overline{\sigma}$. We have
\begin{align*}
    \Sigma_1=& \frac{1}{b^{2n}}\sum_{\lambda,N=1}^{b^n}E_1\left(\frac{\lambda}{b^n},\frac{N}{b^n}\right)E_2\left(\frac{\lambda}{b^n},\frac{N}{b^n}\right) \\
       =& \frac{1}{b^{2n}}\sum_{\lambda,N=1}^{b^n}\left(\sum_{i=1}^{n}\varphi_{b,\varepsilon_i(\lambda,N,\Sigma)}^{\sigma_{i-1}}\left(\frac{N}{b^i}\right)\right)
            \left(\sum_{j=1}^{n}\varphi_{b,\varepsilon_j(\lambda,N,\Sigma^{\ast})}^{\sigma_{j-1}^{\ast}}\left(\frac{N}{b^j}\right)\right)\\
       =& \frac{1}{b^{2n}}\sum_{\lambda,N=1}^{b^n}\left(\sum_{\substack{i=1\\ \sigma_{i-1}=\sigma}}^{n}\varphi_{b,\varepsilon_i(\lambda,N,\Sigma)}^{\sigma}\left(\frac{N}{b^i}\right)+\sum_{\substack{i=1\\ \sigma_{i-1}=\overline{\sigma}}}^{n}\varphi_{b,\varepsilon_i(\lambda,N,\Sigma)}^{\overline{\sigma}}\left(\frac{N}{b^i}\right)\right)  \\
       &\times \left(\sum_{\substack{j=1\\ \sigma_{j-1}=\sigma}}^{n}\varphi_{b,\varepsilon_j(\lambda,N,\Sigma^{\ast})}^{\overline{\sigma}}\left(\frac{N}{b^j}\right)+\sum_{\substack{j=1\\ \sigma_{j-1}=\overline{\sigma}}}^{n}\varphi_{b,\varepsilon_j(\lambda,N,\Sigma^{\ast})}^{\sigma}\left(\frac{N}{b^j}\right)\right)\\ 
    =& \frac{1}{b^{2n}}\sum_{\lambda,N=1}^{b^n}\left(\sum_{\substack{i=1\\ \sigma_{i-1}=\sigma}}^{n}\varphi_{b,\varepsilon_i(\lambda,N,\Sigma)}^{\sigma}\left(\frac{N}{b^i}\right)\right)\left(\sum_{\substack{j=1\\ \sigma_{j-1}=\sigma}}^{n}\varphi_{b,\varepsilon_j(\lambda,N,\Sigma^{\ast})}^{\overline{\sigma}}\left(\frac{N}{b^j}\right)\right) \\   
    &+\frac{1}{b^{2n}}\sum_{\lambda,N=1}^{b^n}\left(\sum_{\substack{i=1\\ \sigma_{i-1}=\sigma}}^{n}\varphi_{b,\varepsilon_i(\lambda,N,\Sigma)}^{\sigma}\left(\frac{N}{b^i}\right)\right)\left(\sum_{\substack{j=1\\ \sigma_{j-1}=\overline{\sigma}}}^{n}\varphi_{b,\varepsilon_j(\lambda,N,\Sigma^{\ast})}^{\sigma}\left(\frac{N}{b^j}\right)\right) \\ 
    &+\frac{1}{b^{2n}}\sum_{\lambda,N=1}^{b^n}\left(\sum_{\substack{i=1\\ \sigma_{i-1}=\overline{\sigma}}}^{n}\varphi_{b,\varepsilon_i(\lambda,N,\Sigma)}^{\overline{\sigma}}\left(\frac{N}{b^i}\right)\right)\left(\sum_{\substack{j=1\\ \sigma_{j-1}=\sigma}}^{n}\varphi_{b,\varepsilon_j(\lambda,N,\Sigma^{\ast})}^{\overline{\sigma}}\left(\frac{N}{b^j}\right)\right) \\ 
    &+\frac{1}{b^{2n}}\sum_{\lambda,N=1}^{b^n}\left(\sum_{\substack{i=1\\ \sigma_{i-1}=\overline{\sigma}}}^{n}\varphi_{b,\varepsilon_i(\lambda,N,\Sigma)}^{\overline{\sigma}}\left(\frac{N}{b^i}\right)\right)\left(\sum_{\substack{j=1\\ \sigma_{j-1}=\overline{\sigma}}}^{n}\varphi_{b,\varepsilon_j(\lambda,N,\Sigma^{\ast})}^{\sigma}\left(\frac{N}{b^j}\right)\right) \\
    =&:S_1+S_2+S_3+S_4.
\end{align*}
Now, for the first time in this proof, we have to deal with the functions $\varphi_{b,h}^{\sigma}$ which appear in Lemma~\ref{allgemein}. First, we only need results that have already been proven in previous papers.
The proofs of the following auxiliary results can be found in \cite[Lemma 2]{FP}, \cite[Lemma 3]{FPPS09} and \cite[Lemma 4]{FPPS09}, respectively.

\begin{lemma} \label{previous} Let $1\leq N \leq b^n$, $1\leq i < j \leq n$ and $\sigma\in\mathfrak{S}_b$. Then we have for $\sigma_i,\sigma_j \in \{\sigma,\overline{\sigma}\}$
    $$ \sum_{\lambda=1}^{b^n}\big(\varphi_{b,\varepsilon_i(\lambda,N,\Sigma_1)}^{\sigma_i}\left(\frac{N}{b^i}\right)\big)
		                       \big(\varphi_{b,\varepsilon_j(\lambda,N,\Sigma_2)}^{\sigma_j}\left(\frac{N}{b^j}\right)\big)
													=b^{n-2}\varphi_b^{\sigma_i}\left(\frac{N}{b^i}\right)\varphi_b^{\sigma_j}\left(\frac{N}{b^j}\right),$$
 where $\Sigma_1,\Sigma_2\in\{\sigma,\overline{\sigma}\}^n$ may be different.
Then, for $1\leq i < j \leq n$ and an arbitrary permutation $\sigma\in\mathfrak{S}_b$, we have 
$$ \sum_{N=1}^{b^n}\varphi_b^{\sigma}\left(\frac{N}{b^i}\right)\varphi_b^{\sigma}\left(\frac{N}{b^j}\right)=b^{n+2}\big(\Phi_b^{\sigma}\big)^2. $$
Finally, for $\sigma\in\mathfrak{S}_b$, $\overline{\sigma}=\tau \circ \sigma$ and any $h\in\{0,\dots,b-1\}$, we also have the relations
$$ \varphi_{b,h}^{\overline{\sigma}}=-\varphi_{b,b-h}^{\sigma} $$
 and, as a result from that, $\varphi_b^{\sigma}=-\varphi_b^{\overline{\sigma}}$ and $\varphi_b^{\sigma,(2)}=\varphi_b^{\overline{\sigma},(2)}$.
\end{lemma}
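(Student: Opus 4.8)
The three identities have quite different characters, so I would treat them separately, beginning with the simplest. For the reflection relation, note that since $\overline{\sigma}=\tau\circ\sigma$ with $\tau(k)=b-1-k$, we have $\overline{\sigma}(k)=b-1-\sigma(k)$ for every $k$, so that $\mathcal{Z}_b^{\overline{\sigma}}$ is the reflection of $\mathcal{Z}_b^{\sigma}$ about $1/2$. I would fix $x\in[(k-1)/b,k/b)$ and compare the two branches of Definition~\ref{deffaure} for $\varphi_{b,h}^{\overline{\sigma}}(x)$ and $\varphi_{b,b-h}^{\sigma}(x)$. The condition $0\le h\le\overline{\sigma}(k-1)$ is equivalent to $\sigma(k-1)<b-h$, which is exactly the second branch defining $\varphi_{b,b-h}^{\sigma}$, and conversely; the counting functions match through $A([0,h/b);k;\mathcal{Z}_b^{\overline{\sigma}})=A([(b-h)/b,1);k;\mathcal{Z}_b^{\sigma})$, because $\overline{\sigma}(m)<h$ holds iff $\sigma(m)\ge b-h$. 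Substituting these on each branch yields $\varphi_{b,h}^{\overline{\sigma}}=-\varphi_{b,b-h}^{\sigma}$ at once. Summing over $h$ and using $\varphi_{b,0}^{\sigma}=0$ gives $\varphi_b^{\overline{\sigma}}=-\varphi_b^{\sigma}$, and squaring termwise before summing gives $\varphi_b^{\sigma,(2)}=\varphi_b^{\overline{\sigma},(2)}$.

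For the $\lambda$-sum the integer $N$ is fixed, so the summand depends on $\lambda$ only through the pair $(\varepsilon_i,\varepsilon_j)=(\varepsilon_i(\lambda,N,\Sigma_1),\varepsilon_j(\lambda,N,\Sigma_2))$, and the assertion is equivalent to the statement that each pair $(h_i,h_j)\in\{1,\dots,b-1\}^2$ is attained by exactly $b^{n-2}$ values of $\lambda$. I would establish this by a nested digit count. The digits $\lambda_1,\dots,\lambda_{i-1}$ do not enter $\Lambda_{i-1}$ or $\Lambda_{j-1}$, contributing a free factor $b^{i-1}$. By Lemma~\ref{allgemein} the event $\varepsilon_i=h_i$ confines $\Lambda_{i-1}$ to an interval of exactly $b^{n-i}$ consecutive integers (shifted by $\nu_i$). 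Since $\Lambda_{j-1}=\Lambda_{i-1}\bmod b^{n-j+1}$ and $b^{n-j+1}\mid b^{n-i}$ because $i<j$, this interval covers each residue class modulo $b^{n-j+1}$ exactly $b^{j-i-1}$ times, and within each class the event $\varepsilon_j=h_j$ selects $b^{n-j}$ values. Multiplying, $b^{i-1}\cdot b^{j-i-1}\cdot b^{n-j}=b^{n-2}$, independently of $N,\Sigma_1,\Sigma_2$; factoring the resulting double sum over $(h_i,h_j)$ produces $b^{n-2}\varphi_b^{\sigma_i}(N/b^i)\varphi_b^{\sigma_j}(N/b^j)$.

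The $N$-sum I expect to be the main obstacle, since it needs a genuine structural property of $\varphi_b^{\sigma}$ rather than pure counting. The function $\varphi_b^{\sigma}$ is periodic, continuous, piecewise linear with breakpoints only at the multiples of $1/b$, and satisfies $\varphi_b^{\sigma}(0)=\varphi_b^{\sigma}(1)=0$; hence the sum of the slopes of its $b$ linear pieces vanishes. The key lemma I would prove is the shift-invariant quadrature identity $S(t):=\sum_{m=0}^{b^{k}-1}\varphi_b^{\sigma}(m/b^{k}+t)=b^{k+1}\Phi_b^{\sigma}$ for every $k\ge1$ and every $t\in[0,b^{-k})$. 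Off the finitely many $t$ for which a sample lands on a breakpoint, each linear piece receives exactly $b^{k-1}$ of the $b^k$ sample points, so $S'(t)=b^{k-1}$ times the sum of all slopes, which is $0$; by continuity $S$ is constant. Integrating over $t$ then identifies the constant, since $\int_0^{b^{-k}}S(t)\,dt=\int_0^1\varphi_b^{\sigma}(u)\,du=b\Phi_b^{\sigma}$ gives $S=b^{k+1}\Phi_b^{\sigma}$.

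Granting this quadrature identity, I would reduce the target sum as follows. The integrand has period $b^j$ in $N$, so $\sum_{N=1}^{b^n}=b^{n-j}\sum_{N=0}^{b^j-1}$. Writing $N$ in base $b$ and fixing its lowest $i$ digits fixes $\varphi_b^{\sigma}(N/b^i)$, while summing over the remaining digits $N_i,\dots,N_{j-1}$ yields precisely a shifted grid sum $\sum_{m=0}^{b^{j-i}-1}\varphi_b^{\sigma}(m/b^{j-i}+t)$ with $t=(N\bmod b^i)/b^j\in[0,b^{-(j-i)})$, which equals $b^{j-i+1}\Phi_b^{\sigma}$ by the identity, irrespective of the low digits. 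Summing $\varphi_b^{\sigma}(N/b^i)$ over the $b^i$ choices of low digits gives a further factor $b^{i+1}\Phi_b^{\sigma}$ (the identity with $t=0$), and collecting $b^{n-j}\cdot b^{i+1}\Phi_b^{\sigma}\cdot b^{j-i+1}\Phi_b^{\sigma}=b^{n+2}(\Phi_b^{\sigma})^2$ finishes the argument. The delicate points, both handled via the explicit description in Definition~\ref{deffaure}, are the continuity and endpoint vanishing of $\varphi_b^{\sigma}$ and the measure-zero set of shifts $t$ at which a sample meets a breakpoint.
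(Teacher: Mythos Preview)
Your argument is correct in all three parts. The paper itself does not prove this lemma but simply cites \cite[Lemma~2]{FP}, \cite[Lemma~3]{FPPS09} and \cite[Lemma~4]{FPPS09}, so you have supplied what the paper omits. Your treatments of the reflection relation and of the $\lambda$-sum follow the standard route found in those references: a direct branch-by-branch comparison using $A([0,h/b);k;\mathcal{Z}_b^{\overline{\sigma}})=A([(b-h)/b,1);k;\mathcal{Z}_b^{\sigma})$ for the former, and the observation that for $h_i,h_j\neq0$ the fibre $\{\lambda:\varepsilon_i=h_i,\ \varepsilon_j=h_j\}$ has cardinality exactly $b^{n-2}$ for the latter (your digit count $b^{i-1}\cdot b^{j-i-1}\cdot b^{n-j}$ is exactly the argument in \cite{FP}). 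Where you differ somewhat is in the $N$-sum: rather than a direct piecewise evaluation, you establish the shift-invariant quadrature identity $\sum_{m=0}^{b^k-1}\varphi_b^{\sigma}(m/b^k+t)=b^{k+1}\Phi_b^{\sigma}$ by showing its derivative is $b^{k-1}$ times the total slope of $\varphi_b^{\sigma}$, which vanishes by continuity and the endpoint condition $\varphi_b^{\sigma}(0)=\varphi_b^{\sigma}(1)=0$. This packages the computation more conceptually and makes transparent why the low-digit shift $t=(N\bmod b^i)/b^j$ drops out; the cited proofs rely on the same structural facts about $\varphi_b^{\sigma}$ but organize them more computationally.
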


 We change the summation order and use the statements of Lemma~\ref{previous} to compute 
\begin{align*}
    S_2=&\frac{1}{b^{2n}}\sum_{\substack{i,j=1 \\ \sigma_{i-1}=\sigma \\ \sigma_{j-1}=\overline{\sigma}}}^{n}\sum_{N=1}^{b^n}b^{n-2}\varphi_b^{\sigma}\left(\frac{N}{b^i}\right)\varphi_b^{\sigma}\left(\frac{N}{b^j}\right)
    =\frac{1}{b^{2n}}b^{n-2}\sum_{\substack{i,j=1 \\ \sigma_{i-1}=\sigma \\ \sigma_{j-1}=\overline{\sigma}}}^{n}b^{n+2}\big(\Phi_b^{\sigma}\big)^2 
    =l(n-l)\big(\Phi_b^{\sigma}\big)^2.
\end{align*}
Similarly, we show $S_3=l(n-l)\big(\Phi_b^{\sigma}\big)^2=S_2$. To evaluate $S_1$, we have to distinguish the cases where $i\neq j$ and where $i=j$.
The first case can be treated analogously to $S_2$ and $S_3$.
Hence,
\begin{align*}
   S_1=&\frac{1}{b^{2n}}\sum_{\substack{i,j=1 \\ \sigma_{i-1}=\sigma \\ \sigma_{j-1}=\sigma \\ i\neq j}}^{n}\sum_{N=1}^{b^n}b^{n-2}\varphi_b^{\sigma}\left(\frac{N}{b^i}\right)\varphi_b^{\overline{\sigma}}\left(\frac{N}{b^j}\right) \\
   &+\frac{1}{b^{2n}}\sum_{\substack{j=1 \\ \sigma_{j-1}=\sigma }}^{n}\sum_{\lambda,N=1}^{b^n}\varphi_{b,\varepsilon_j(\lambda,N,\Sigma)}^{\sigma}\left(\frac{N}{b^j}\right)\varphi_{b,\varepsilon_j(\lambda,N,\Sigma^{\ast})}^{\overline{\sigma}}\left(\frac{N}{b^j}\right) \\
   =& -l(l-1)\big(\Phi_b^{\sigma}\big)^2+\frac{1}{b^{2n}}\sum_{\substack{j=1 \\ \sigma_{j-1}=\sigma }}^{n}\sum_{\lambda,N=1}^{b^n}\varphi_{b,\varepsilon_j(\lambda,N,\Sigma)}^{\sigma}\left(\frac{N}{b^j}\right)\varphi_{b,\varepsilon_j(\lambda,N,\Sigma^{\ast})}^{\overline{\sigma}}\left(\frac{N}{b^j}\right).
\end{align*}
In the same way we show
$$ S_4=-(n-l)(n-l-1)\big(\Phi_b^{\sigma}\big)^2+\frac{1}{b^{2n}}\sum_{\substack{j=1 \\ \sigma_{j-1}=\overline{\sigma} }}^{n}\sum_{\lambda,N=1}^{b^n}\varphi_{b,\varepsilon_j(\lambda,N,\Sigma)}^{\osigma}\left(\frac{N}{b^j}\right)\varphi_{b,\varepsilon_j(\lambda,N,\Sigma^{\ast})}^{\sigma}\left(\frac{N}{b^j}\right). $$
From the proof of Lemma~\ref{main} we observe that
$$ \sum_{\lambda,N=1}^{b^n}\varphi_{b,\varepsilon_j(\lambda,N,\Sigma)}^{\osigma}\left(\frac{N}{b^j}\right)\varphi_{b,\varepsilon_j(\lambda,N,\Sigma^{\ast})}^{\sigma}\left(\frac{N}{b^j}\right)=\sum_{\lambda,N=1}^{b^n}\varphi_{b,\varepsilon_j(\lambda,N,\Sigma)}^{\sigma}\left(\frac{N}{b^j}\right)\varphi_{b,\varepsilon_j(\lambda,N,\Sigma^{\ast})}^{\overline{\sigma}}\left(\frac{N}{b^j}\right). $$
Summarizing, we have
\begin{align*}
\Sigma_1=(-n^2-4l^2+n+4ln)\big(\Phi_b^{\sigma}\big)^2+\frac{1}{b^{2n}}\sum_{\substack{j=1 }}^{n}\sum_{\lambda,N=1}^{b^n}\varphi_{b,\varepsilon_j(\lambda,N,\Sigma)}^{\sigma}\left(\frac{N}{b^j}\right)\varphi_{b,\varepsilon_j(\lambda,N,\Sigma^{\ast})}^{\overline{\sigma}}\left(\frac{N}{b^j}\right)
\end{align*}
and thus, by putting all results together, we arrive at
\begin{align}\nonumber
 \left(2b^n\, L_2(\cR_{b,n}^{\Sigma,\sym})\right)^2=& 2n\Phi_b^{\sigma,(2)}+\frac{13}{9}+\frac{1}{b^n}\\ &+\frac{2}{b^{2n}}\sum_{\substack{j=1 }}^{n}\sum_{\lambda,N=1}^{b^n}\varphi_{b,\varepsilon_j(\lambda,N,\Sigma)}^{\sigma}\left(\frac{N}{b^j}\right)\varphi_{b,\varepsilon_j(\lambda,N,\Sigma^{\ast})}^{\overline{\sigma}}\left(\frac{N}{b^j}\right). \label{final}
\end{align}
We observe that the remaining step to finally prove Theorem~\ref{theo} is the evaluation of the expression 
$$ \frac{2}{b^{2n}}\sum_{\substack{j=1 }}^{n}\sum_{\lambda,N=1}^{b^n}\varphi_{b,\varepsilon_j(\lambda,N,\Sigma)}^{\sigma}\left(\frac{N}{b^j}\right)\varphi_{b,\varepsilon_j(\lambda,N,\Sigma^{\ast})}^{\overline{\sigma}}\left(\frac{N}{b^j}\right). $$
This is the most difficult and technical part of the proof, and all the lemmas we present in the following subsection aim at calculating this term. The final result is stated in Lemma~\ref{main}. Inserting the formula given in this lemma (and in Remark~\ref{simpleform}) into \eqref{final} completes the proof of Theorem~\ref{theo}.

\subsection{The details of the proof} \label{detail}

To guide the reader through the proofs in this subsection, we explain the basic ideas in a few lines preceeding the corresponding lemma, respectively.\\
Lemma~\ref{ugly} is the only lemma where we need the complicated definition of the numbers $\varepsilon_j(\lambda,N,\Sigma)$ appearing in Lemma~\ref{allgemein}.
The proof of this lemma may appear extremely technical on first look, but in fact we only apply basic combinatorial considerations. The main concern is
to investigate for which integers $\lambda\in\{1,\dots,b^n\}$ the numbers $\varepsilon_j(\lambda,N,\Sigma)$ and $\varepsilon_j(\lambda,N,\Sigma^{\ast})$ take certain values $h,h+1\in\{0,\dots,b-1\}$ simultaneously.
\begin{lemma} \label{ugly} Let $\sigma\in\mathfrak{S}_b$. For all $1\leq N \leq b^n$ and $1\leq j \leq n-1$ we have
 \begin{align*} \sum_{\lambda=1}^{b^n}&\varphi_{b,\varepsilon_j(\lambda,N,\Sigma)}^{\sigma}\left(\frac{N}{b^j}\right)
           \varphi_{b,\varepsilon_j(\lambda,N,\Sigma^{\ast})}^{\overline{\sigma}}\left(\frac{N}{b^j}\right) \\
				  	=&  \begin{cases}							b^{n-1}\widetilde{\varphi}_b^{\sigma}\left(\frac{N}{b^j}\right)+b^{j-1}(b^{n-j}-1-2\nu_j(N,\Sigma))\left(\widetilde{\varphi}_{b,1}^{\sigma}\left(\frac{N}{b^j}\right)-\widetilde{\varphi}_{b}^{\sigma}\left(\frac{N}{b^j}\right)\right)
						 & \\ \hspace{10 cm}\mbox{if } \nu_j(N,\Sigma)<\frac{b^{n-j}-1}{2}, \\					b^{n-1}\widetilde{\varphi}_b^{\sigma}\left(\frac{N}{b^j}\right)+b^{j-1}(2\nu_j(N,\Sigma)+1-b^{n-j})\left(\widetilde{\varphi}_{b,2}^{\sigma}\left(\frac{N}{b^j}\right)-\widetilde{\varphi}_{b}^{\sigma}\left(\frac{N}{b^j}\right)\right)
						 & \\ \hspace{10 cm}\mbox{if } \nu_j(N,\Sigma)\geq\frac{b^{n-j}-1}{2}.
								\end{cases} \end{align*}
If $j=n$, then we have
$$ \sum_{\lambda=1}^{b^n}\varphi_{b,\varepsilon_n(\lambda,N,\Sigma)}^{\sigma}\left(\frac{N}{b^n}\right)
           \varphi_{b,\varepsilon_n(\lambda,N,\Sigma^{\ast})}^{\overline{\sigma}}\left(\frac{N}{b^n}\right)
					=b^{n-1}\widetilde{\varphi}_b^{\sigma}\left(\frac{N}{b^n}\right).$$
\end{lemma}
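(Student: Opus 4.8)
The plan is to fix $N$ and $j$ throughout and to observe that the argument $N/b^j$ is common to both factors, so that $\varphi_{b,h}^{\sigma}(N/b^j)$ and $\varphi_{b,h}^{\overline{\sigma}}(N/b^j)$ are merely constants indexed by $h\in\{0,\dots,b-1\}$. The whole sum is then a weighted count of how often the pair $(\varepsilon_j(\lambda,N,\Sigma),\varepsilon_j(\lambda,N,\Sigma^{\ast}))$ occurs as $\lambda$ runs through $\{1,\dots,b^n\}$. By Lemma~\ref{allgemein} the number $\varepsilon_j(\lambda,N,\Sigma)$ depends on $\lambda$ only through $\Lambda_{j-1}(\lambda)=\lambda_jb^{n-j}+\dots+\lambda_n$, the high-order digits $\lambda_1,\dots,\lambda_{j-1}$ being free. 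Writing $t=\Lambda_{j-1}$, each value $t\in\{0,\dots,b^{n-j+1}-1\}$ is attained by exactly $b^{j-1}$ integers $\lambda$, and since $\varphi_{b,0}^{\sigma}=0$ and $\varepsilon_j=0$ for $\lambda=b^n$, the contributions of $\lambda=0$ and $\lambda=b^n$ vanish. This reduces the sum to $b^{j-1}\sum_{t=0}^{b^{n-j+1}-1}\varphi_{b,e(t)}^{\sigma}(N/b^j)\varphi_{b,e^{\ast}(t)}^{\overline{\sigma}}(N/b^j)$, where $e(t)$ and $e^{\ast}(t)$ are the step functions of $t$ prescribed by Lemma~\ref{allgemein} with thresholds $\nu:=\nu_j(N,\Sigma)$ and $\nu^{\ast}:=\nu_j(N,\Sigma^{\ast})$.

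The second, and decisive, ingredient is the identity $\nu+\nu^{\ast}=b^{n-j}-1$. This follows at once from $\sigma_i^{\ast}=\tau\circ\sigma_i$ and $\tau(k)=b-1-k$, which give $\sigma_i^{\ast}(N_i)=b-1-\sigma_i(N_i)$, so that the two digit sums defining $\nu$ and $\nu^{\ast}$ add, digit by digit, to $\sum_{i=j}^{n-1}(b-1)b^{n-1-i}=b^{n-j}-1$. In particular $\nu<(b^{n-j}-1)/2$ is equivalent to $\nu<\nu^{\ast}$ and $\nu\geq(b^{n-j}-1)/2$ to $\nu\geq\nu^{\ast}$, which is precisely the case distinction in the statement.

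The heart of the proof, and the step I expect to be the most delicate, is to determine the joint distribution of $(e(t),e^{\ast}(t))$. Writing $m=b^{n-j}$, the function $e$ is the step function on $\{0,\dots,bm-1\}$ with jumps at the points $\nu+km$ ($k=0,\dots,b-1$), taking the successive values $0,1,\dots,b-1$ and then wrapping back to $0$; the function $e^{\ast}$ is the same with $\nu$ replaced by $\nu^{\ast}$. In the case $\nu<\nu^{\ast}$ the jump points interlace as $\nu+km<\nu^{\ast}+km<\nu+(k+1)m$, and partitioning $\{0,\dots,bm-1\}$ along all of them shows that $(e,e^{\ast})$ takes the value $(k,k)$ on exactly $2\nu+1$ integers for each $k\in\{0,\dots,b-1\}$, the value $(k,k-1)$ on exactly $\nu^{\ast}-\nu$ integers for each $k\in\{1,\dots,b-1\}$, and the value $(0,b-1)$ on $\nu^{\ast}-\nu$ integers (the wrap-around block). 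The main obstacle here is the careful bookkeeping of the half-open blocks and of the wrap-around at the top of the range; as a safeguard the total count should be verified to equal $bm$. The case $\nu\geq\nu^{\ast}$ is handled symmetrically by interchanging the roles of $e$ and $e^{\ast}$, producing instead the pairs $(k,k)$, $(k-1,k)$ and $(b-1,0)$.

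Finally I would collect terms. Using $\varphi_{b,0}^{\sigma}=\varphi_{b,0}^{\overline{\sigma}}=0$, the wrap-around pairs $(0,b-1)$ and $(b-1,0)$ contribute nothing. The diagonal pairs assemble into $\sum_{k=0}^{b-1}\varphi_{b,k}^{\sigma}\varphi_{b,k}^{\overline{\sigma}}=\widetilde{\varphi}_b^{\sigma}$, while the off-diagonal pairs give $\sum_{k=1}^{b-1}\varphi_{b,k}^{\sigma}\varphi_{b,k-1}^{\overline{\sigma}}=\widetilde{\varphi}_{b,1}^{\sigma}$ in the first case and $\sum_{k=1}^{b-1}\varphi_{b,k-1}^{\sigma}\varphi_{b,k}^{\overline{\sigma}}=\widetilde{\varphi}_{b,2}^{\sigma}$ in the second (all evaluated at $N/b^j$). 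Hence the reduced sum equals $b^{j-1}[(2\nu+1)\widetilde{\varphi}_b^{\sigma}+(\nu^{\ast}-\nu)\widetilde{\varphi}_{b,1}^{\sigma}]$ in the first case; substituting $\nu^{\ast}-\nu=b^{n-j}-1-2\nu$ and using $b^{j-1}b^{n-j}=b^{n-1}$ rearranges this into $b^{n-1}\widetilde{\varphi}_b^{\sigma}+b^{j-1}(b^{n-j}-1-2\nu)(\widetilde{\varphi}_{b,1}^{\sigma}-\widetilde{\varphi}_b^{\sigma})$, the claimed expression, and the second case is entirely analogous. The case $j=n$ is immediate and separate: there $\varepsilon_n(\lambda,N,\Sigma)=\lambda_n$ is independent of $\Sigma$, so the two indices coincide, each value $h$ occurs $b^{n-1}$ times, and the sum collapses to $b^{n-1}\widetilde{\varphi}_b^{\sigma}(N/b^n)$.
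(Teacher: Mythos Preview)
Your proposal is correct and follows essentially the same approach as the paper's proof: reduce the sum over $\lambda$ to a sum over $t=\Lambda_{j-1}$ with multiplicity $b^{j-1}$, establish the key identity $\nu+\nu^{\ast}=b^{n-j}-1$, determine the joint distribution of $(\varepsilon_j,\varepsilon_j^{\ast})$ by counting how many $t$ give each pair, and then collect terms using $\varphi_{b,0}^{\sigma}=0$. Your diagonal count $2\nu+1$ is exactly the paper's $b^{n-j}-(\nu^{\ast}-\nu)$ rewritten via the identity, and your handling of the wrap-around pair and the $j=n$ case match the paper's.
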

\begin{proof} The case $N=b^n$ is trivial since then the left- and the right-hand-sides of the above equality are zero. We therefore assume $1\leq N <b^n$ now. We first show the case $j=n$. Since $\varepsilon_n(\lambda,N,\Sigma)=\varepsilon_n(\lambda,N,\Sigma^{\ast})=\lambda_n$
  by definition, we can write
	\begin{align*}
	   \sum_{\lambda=1}^{b^n}\varphi_{b,\varepsilon_n(\lambda,N,\Sigma)}^{\sigma}\left(\frac{N}{b^n}\right)
           \varphi_{b,\varepsilon_n(\lambda,N,\Sigma^{\ast})}^{\overline{\sigma}}\left(\frac{N}{b^n}\right)
					 =\sum_{h=0}^{b-1}\varphi_{b,h}^{\sigma}\left(\frac{N}{b^n}\right)\varphi_{b,h}^{\overline{\sigma}}\left(\frac{N}{b^n}\right)\sum_{\substack{\lambda=1 \\ \lambda_n=h}}^{b^n}1=b^{n-1}\widetilde{\varphi}_b^{\sigma}\left(\frac{N}{b^n}\right).
	\end{align*}

 We fix $j\in\{1,\dots,n-1\}$, $N\in\{1,\dots,b^n-1\}$ and $\Sigma\in\{\sigma,\overline{\sigma}\}^n$.
	We have to distinguish between two cases. Let us first assume that $\nu_j(N,\Sigma)<\nu_j(N,\Sigma^{\ast})$. Then we can either have 
	$$\varepsilon_j(\lambda,N,\Sigma)=\varepsilon_j(\lambda,N,\Sigma^{\ast}) \text{\, or \,} \varepsilon_j(\lambda,N,\Sigma)=\varepsilon_j(\lambda,N,\Sigma^{\ast})+1.$$
	We count the number of $\Lambda_{j-1}$ such that $\varepsilon_j(\lambda,N,\Sigma)=\varepsilon_j(\lambda,N,\Sigma^{\ast})=h$ for any $h\in\{0,\dots,b-1\}$.
	For $h\in\{1,\dots,b-1\}$ these $\Lambda_{j-1}$ are given by $\nu_j(N,\Sigma)+(h-1)b+z$ for $z\in\{\nu_j(N,\Sigma^{\ast})-\nu_j(N,\Sigma)+1,\dots, b^{n-j}\}$ and for $h=0$ the corresponding $\Lambda_{j-1}$ are $0,\dots, \nu_j(N,\Sigma)$ and 
	$\nu_j(N,\Sigma^{\ast})+(b-1)b^{n-j}+1,\dots,b^{n-j+1}-1$. Hence for all $h\in\{0,\dots,b-1\}$ we have $b^{n-j}-(\nu_j(N,\Sigma^{\ast})-\nu_j(N,\Sigma))$ values for $\Lambda_{j-1}$ such that $\varepsilon_j(\lambda,N,\Sigma)=\varepsilon_j(\lambda,N,\Sigma^{\ast})=h$. Since there are always
	$b^{j-1}$ elements $\lambda\in\{1,\dots,b^n\}$ with the same $\Lambda_{j-1}$ we have proven
	\begin{equation} \label{zwischen}\sum_{\substack{\lambda=1\\ \{\lambda:\, \varepsilon_j(\lambda,N,\Sigma)=\varepsilon_j(\lambda,N,\Sigma^{\ast})=h\}}}^{b^n}1=b^{j-1}(b^{n-j}-(\nu_j(N,\Sigma^{\ast})-\nu_j(N,\Sigma))).\end{equation}
	For $h\in\{0,\dots,b-2\}$, we have $\varepsilon_j(\lambda,N,\Sigma)=h+1=\varepsilon_j(\lambda,N,\Sigma^{\ast})+1$ for $\Lambda_{j-1}$ of the form $\nu_j(N,\Sigma)+hb+z$ for $z\in\{1,\dots,\nu_j(N,\Sigma^{\ast})-\nu_j(N,\Sigma)\}$. Hence we have
	\begin{equation} \label{zwischen2}\sum_{\substack{\lambda=1\\ \{\lambda:\, \varepsilon_j(\lambda,N,\Sigma)=h+1, \, \varepsilon_j(\lambda,N,\Sigma^{\ast})=h\}}}^{b^n}1=b^{j-1}(\nu_j(N,\Sigma^{\ast})-\nu_j(N,\Sigma))\end{equation}
	for all $h\in\{0,\dots,b-2\}$. Here we simply neglect the also possible case $\varepsilon_j(\lambda,N,\Sigma)=0,\varepsilon_j(\lambda,N,\Sigma^{\ast})=b-1$, since the corresponding summands in the sum
	$$ \sum_{\lambda=1}^{b^n}\varphi_{b,\varepsilon_j(\lambda,N,\Sigma)}^{\sigma}\left(\frac{N}{b^j}\right)
           \varphi_{b,\varepsilon_j(\lambda,N,\Sigma^{\ast})}^{\overline{\sigma}}\left(\frac{N}{b^j}\right)$$
	are zero anyway.
In the second case $\nu_j(N,\Sigma)\geq\nu_j(N,\Sigma^{\ast})$ we only have the possibilities 
$$\varepsilon_j(\lambda,N,\Sigma)=\varepsilon_j(\lambda,N,\Sigma^{\ast}) \text{\, or \,} \varepsilon_j(\lambda,N,\Sigma)+1=\varepsilon_j(\lambda,N,\Sigma^{\ast}).$$
 Apart from that, the situation is quite the same as in the first case and we have
	$$\sum_{\substack{\lambda=1\\ \{\lambda:\, \varepsilon_j(\lambda,N,\Sigma)=\varepsilon_j(\lambda,N,\Sigma^{\ast})=h\}}}^{b^n}1=b^{j-1}(b^{n-j}-(\nu_j(N,\Sigma)-\nu_j(N,\Sigma^{\ast})))$$
	for all $h\in\{0,\dots,b-1\}$ and
	$$\sum_{\substack{\lambda=1\\ \{\lambda:\, \varepsilon_j(\lambda,N,\Sigma)=h, \, \varepsilon_j(\lambda,N,\Sigma^{\ast})=h+1\}}}^{b^n}1=b^{j-1}(\nu_j(N,\Sigma)-\nu_j(N,\Sigma^{\ast}))$$ for all $h\in\{0,\dots,b-2\}$.
	 Next we prove the relation $\nu_j(N,\Sigma^{\ast})=b^{n-j}-1-\nu_j(N,\Sigma)$. From the definition of $\nu_j(N,\Sigma^{\ast})$   we find 
	\begin{align*}
        \nu_j(N,\Sigma^{\ast})=& \sigma^{\ast}_j(N_j)b^{n-j-1}+\dots+\sigma^{\ast}_{n-2}(N_{n-2})b+\sigma^{\ast}_{n-1}(N_{n-1}) \\
				=& (b-1-\sigma_j(N_j))b^{n-j-1}+\dots+(b-1-\sigma_{n-2}(N_{n-2}))b\\ 
				  &+(b-1-\sigma_{n-1}(N_{n-1})) \\
				=& (b-1)(b^{n-j-1}+\dots+b+1)-\nu_j(N,\Sigma)=b^{n-j}-1-\nu_j(N,\Sigma).
	\end{align*}
	This identity yields the equivalence of $\nu_j(N,\Sigma)<\nu_j(N,\Sigma^{\ast})$ and $\nu_j(N,\Sigma)<\frac{b^{n-j}-1}{2}$
	as well as the equivalence of $\nu_j(N,\Sigma)\geq\nu_j(N,\Sigma^{\ast})$ and $\nu_j(N,\Sigma)\geq\frac{b^{n-j}-1}{2}$.
	Now in the case $\nu_j(N,\Sigma)<\frac{b^{n-j}-1}{2}$ we find
	\begin{align*}
	    \sum_{\lambda=1}^{b^n}\varphi_{b,\varepsilon_j(\lambda,N,\Sigma)}^{\sigma}\left(\frac{N}{b^j}\right)&
           \varphi_{b,\varepsilon_j(\lambda,N,\Sigma^{\ast})}^{\overline{\sigma}}\left(\frac{N}{b^j}\right) \\=&
					 \sum_{h=0}^{b-1}\varphi_{b,h}^{\sigma}\left(\frac{N}{b^j}\right)\varphi_{b,h}^{\overline{\sigma}}\left(\frac{N}{b^j}\right)
					\sum_{\substack{\lambda=1\\ \{\lambda:\, \varepsilon_j(\lambda,N,\Sigma)=\varepsilon_j(\lambda,N,\Sigma^{\ast})=h\}}}^{b^n}1 \\
					&+ \sum_{h=0}^{b-2}\varphi_{b,h+1}^{\sigma}\left(\frac{N}{b^j}\right)\varphi_{b,h}^{\overline{\sigma}}\left(\frac{N}{b^j}\right)
					\sum_{\substack{\lambda=1\\ \{\lambda:\, \varepsilon_j(\lambda,N,\Sigma)=h+1,\varepsilon_j(\lambda,N,\Sigma^{\ast})=h\}}}^{b^n}1.
	\end{align*}
	Using \eqref{zwischen}, \eqref{zwischen2} and the definition of $\widetilde{\varphi}_b^{\sigma}$ and $\widetilde{\varphi}_{b,1}^{\sigma}$, this leads to
	\begin{align*}
	    \sum_{\lambda=1}^{b^n}\varphi_{b,\varepsilon_j(\lambda,N,\Sigma)}^{\sigma}\left(\frac{N}{b^j}\right)&
           \varphi_{b,\varepsilon_j(\lambda,N,\Sigma^{\ast})}^{\overline{\sigma}}\left(\frac{N}{b^j}\right) \\=&
					b^{j-1}(b^{n-j}-(\nu_j(N,\Sigma^{\ast})-\nu_j(N,\Sigma)))\widetilde{\varphi}_b^{\sigma}\left(\frac{N}{b^j}\right)\\
			&+b^{j-1}(\nu_j(N,\Sigma^{\ast})-\nu_j(N,\Sigma))\widetilde{\varphi}_{b,1}^{\sigma}\left(\frac{N}{b^j}\right) \\
			=&b^{n-1}\widetilde{\varphi}_b^{\sigma}\left(\frac{N}{b^j}\right)+
			b^{j-1}(\nu_j(N,\Sigma^{\ast})-\nu_j(N,\Sigma))\left(\widetilde{\varphi}_{b,1}^{\sigma}\left(\frac{N}{b^j}\right)-\widetilde{\varphi}_{b}^{\sigma}\left(\frac{N}{b^j}\right)\right).\end{align*}
			By applying the above relation between $\nu_j(N,\Sigma)$ and $\nu_j(N,\Sigma^{\ast})$ we find
			$$ \nu_j(N,\Sigma^{\ast})-\nu_j(N,\Sigma)=b^{n-j}-1-2\nu_j(N,\Sigma), $$
			which yields the claim of this lemma in the case $\nu_j(N,\Sigma)<\frac{b^{n-j}-1}{2}$. The other case can be completed
			analogously.
\end{proof}

We are now concerned with the task to compute sums of the form $\sum_{N=1}^{b^j}\widetilde{\varphi}_b^{\sigma}\left(\frac{N}{b^j}\right)$ (and analogous sums for $\widetilde{\varphi}_{b,1}^{\sigma}$ and $\widetilde{\varphi}_{b,2}^{\sigma}$). Let us first consider such sums for the special case $\sigma=\id$, since in this case the functions $\varphi_{b,h}^{\id}$, which we introduced in Definition~\ref{deffaure}, can be written down in a simple way. The rest of the proof of the subsequent Lemma~\ref{id} contains evaluations of elementary sums and integrals.

\begin{lemma} \label{id} For all $j\in\{1,\dots,n\}$ we have
   \begin{eqnarray*}\sum_{N=1}^{b^j}\widetilde{\varphi}_b^{\id}\left(\frac{N}{b^j}\right)&=&b^j\left( \int_0^1 \widetilde{\varphi}_b^{\id}(x)\rd x+\frac{A_b(j,\id)}{2}\right), \\
	              \sum_{N=1}^{b^j}\widetilde{\varphi}_{b,1}^{\id}\left(\frac{N}{b^j}\right)&=&b^j \left(\int_0^1 \widetilde{\varphi}_{b,1}^{\id}(x)\rd x+\frac{\overline{A}_b(j,\id)}{2}\right), \\
								\sum_{N=1}^{b^j}\widetilde{\varphi}_{b,2}^{\id}\left(\frac{N}{b^j}\right)&=&b^j\left( \int_0^1 \widetilde{\varphi}_{b,2}^{\id}(x)\rd x+\frac{\overline{A}_b(j,\id)}{2}\right),\end{eqnarray*}
						where we have
						$$ A_b(j,\id)=\begin{cases}
						                 -\frac{1}{36b^{2j}}(b^3+2b) & \mbox{if \quad} b \mbox{\quad is even}, \\
														 -\frac{1}{36b^{2j}}(b^3-b) & \mbox{if \quad} b \mbox{\quad is odd}.
						              \end{cases}$$
					and
					$$ \overline{A}_b(j,\id)=\begin{cases}
						                 -\frac{1}{36b^{2j}}(b^3-4b) & \mbox{if \quad} b \mbox{\quad is even}, \\
														 -\frac{1}{36b^{2j}}(b^3-b) & \mbox{if \quad} b \mbox{\quad is odd}.
						              \end{cases}$$
\end{lemma}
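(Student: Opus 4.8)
The plan is to reduce everything to the explicit piecewise-linear description of $\varphi_{b,h}^{\id}$ and then to sum piecewise quadratics over equally spaced nodes. Since $\overline{\id}=\tau$, Lemma~\ref{previous} gives $\varphi_{b,h}^{\overline{\id}}=-\varphi_{b,b-h}^{\id}$, so it suffices to know $\varphi_{b,h}^{\id}$. Evaluating Definition~\ref{deffaure} for $\mathcal{Z}_b^{\id}$ shows that on $[(k-1)/b,k/b)$ one has $\varphi_{b,h}^{\id}(x)=h(1-x)$ for $0\le h\le k-1$ and $\varphi_{b,h}^{\id}(x)=(b-h)x$ for $k\le h\le b-1$. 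In particular each $\varphi_{b,h}^{\id}$ is continuous and $1$-periodic (the two branches agree at every $k/b$), and hence so are the products $\varphi_{b,h}^{\id}\varphi_{b,h}^{\overline{\id}}$, $\varphi_{b,h+1}^{\id}\varphi_{b,h}^{\overline{\id}}$ and $\varphi_{b,h}^{\id}\varphi_{b,h+1}^{\overline{\id}}$. Consequently $\widetilde{\varphi}_b^{\id}$, $\widetilde{\varphi}_{b,1}^{\id}$ and $\widetilde{\varphi}_{b,2}^{\id}$ are continuous, $1$-periodic, piecewise-quadratic functions whose only kinks lie at the breakpoints $p/b$, $p\in\{0,\dots,b-1\}$.

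First I would establish the general summation device. Writing $f$ for any one of the three functions, the nodes $N/b^j$ lying in $[(k-1)/b,k/b)$ are exactly the $b^{j-1}$ left-aligned points $(k-1)/b+m/b^j$, $m=0,\dots,b^{j-1}-1$, so $\sum_{N=1}^{b^j}f(N/b^j)$ is a concatenation of left Riemann sums of a quadratic on each subinterval. The Euler--Maclaurin formula on $[(k-1)/b,k/b)$ is \emph{exact} here because a quadratic has constant second derivative, yielding
\[ \sum_{N=1}^{b^j}f\!\left(\frac{N}{b^j}\right)=b^j\int_0^1 f(x)\rd x-\frac12\sum_{k=1}^b\big[f(k/b)-f((k-1)/b)\big]-\frac{1}{12b^j}\sum_{p=0}^{b-1}J_p, \]
where $J_p$ denotes the jump of $f'$ at $p/b$. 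By continuity and periodicity the middle sum telescopes to $f(1)-f(0)=0$, so the entire correction equals $-\frac{1}{12b^j}\sum_{p=0}^{b-1}J_p$. This is exactly the claimed shape $b^j\cdot A/2$ with $A=-\frac{1}{6b^{2j}}\sum_{p=0}^{b-1}J_p$; note that the sum of jumps is independent of $j$, which matches the $b^{-2j}$ factors appearing in $A_b(j,\id)$ and $\overline{A}_b(j,\id)$. (Equivalently, one may bypass Euler--Maclaurin and simply insert the closed forms for $\sum_m 1$, $\sum_m m$, $\sum_m m^2$ on each piece; this is the ``elementary sums'' route.)

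It then remains to compute the two integrals $\int_0^1 f\rd x$ and the jump sums $\sum_p J_p$. The integrals are routine. For the jumps I would exploit the following localization: passing from the subinterval indexed by $k=p$ to the one indexed by $k=p+1$ (across an interior breakpoint $p/b$), the only $\varphi_{b,h}^{\id}$ that changes its linear branch is the one with $h=p$, so in a product the left factor switches only when its index equals $p$ and the right factor only when its index equals $p$. Hence, for $\widetilde{\varphi}_b^{\id}=-\sum_h\varphi_{b,h}^{\id}\varphi_{b,b-h}^{\id}$ only the terms $h=p$ and $h=b-p$ contribute to $J_p$, whereas for $\widetilde{\varphi}_{b,1}^{\id}$ (and symmetrically $\widetilde{\varphi}_{b,2}^{\id}$) the index shift replaces $h=p$ by $h=p-1$; each contribution is read off from the slopes $-h$ and $b-h$ of the two branches times the (continuous) value of the unchanged factor at $p/b$. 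The breakpoint $p=0$ is treated separately using periodicity.

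I expect the main obstacle to be the bookkeeping of this case analysis together with the parity of $b$: whether the two switching indices $p$ and $b-p$ coincide (which happens precisely when $b$ is even and $p=b/2$) and how many terms fall into each branch regime is exactly what forces the split into even and odd $b$ in the formulas for $A_b(j,\id)$ and $\overline{A}_b(j,\id)$. Moreover, because the index shift in $\widetilde{\varphi}_{b,1}^{\id}$, $\widetilde{\varphi}_{b,2}^{\id}$ changes which terms switch branch at $p/b$, it is this shift that makes $\overline{A}_b(j,\id)$ differ from $A_b(j,\id)$ for even $b$ while the two coincide for odd $b$.
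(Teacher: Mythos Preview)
Your proposal is correct and the Euler--Maclaurin device you set up is exactly the Simpson-type identity the paper invokes later in Lemma~\ref{generalsigma}; however, for Lemma~\ref{id} itself the paper proceeds differently. There the authors write down the closed form of $\widetilde{\varphi}_b^{\id}$ on each interval $[k/b,(k+1)/b]$ directly: they split according to whether $k\le (b-1)/2$ or $k>(b-1)/2$ (this is where the two factors $\varphi_{b,h}^{\id}$ and $\varphi_{b,h}^{\tau}$ change branch in different ranges of $h$), obtain explicit quadratics $P_k(x)$ and $Q_k(x)$, and then evaluate both $\int_0^1\widetilde{\varphi}_b^{\id}$ and $\sum_{N=1}^{b^j}\widetilde{\varphi}_b^{\id}(N/b^j)$ by summing closed-form integrals and finite power sums over $k$; the even/odd distinction enters through the location of the split between the $P_k$ and $Q_k$ ranges.

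Your route trades this brute-force polynomial bookkeeping for a derivative-jump computation. The advantage of your approach is structural: it explains \emph{a priori} why the correction has the shape $c\,b^{-2j}$ with a $j$-independent constant, and it isolates exactly the one or two terms (indices $h=p$ and $h=b-p$, respectively $h=p-1$ and $h=b-p$ in the shifted sums) responsible for each local jump, so the parity dichotomy becomes the single question of whether $p=b-p$ can occur. The paper's approach, by contrast, requires no product-rule localization but forces one to track full quadratic coefficients through two nested case splits; it is more elementary but heavier in algebra. Both lead to the same constants, and your method in fact anticipates the argument the paper uses in Lemma~\ref{generalsigma} to reduce general $\sigma$ to $\sigma=\id$.
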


\begin{proof} We use the fact that
    \begin{equation*} \label{idphi} \varphi_{b,h}^{\id}(x)=\begin{cases}
																	(b-h)x & \mbox{if \quad} x\in \left[0,\frac{h}{b}\right], \\
																	h(1-x) & \mbox{if \quad} x\in \left[\frac{h}{b},1\right],
															\end{cases}  \end{equation*}
	and
	  \begin{equation*} \label{tauphi} \varphi_{b,h}^{\tau}(x)=\begin{cases}
																	-hx & \mbox{if \quad} x\in \left[0,\frac{b-h}{b}\right], \\
																	(b-h)x-(b-h) & \mbox{if \quad} x\in \left[\frac{b-h}{b},1\right],
															\end{cases}  \end{equation*}
															which was already mentioned in \cite{FP}.
	Let $x\in [k/b,(k+1)/b]$. Then we have $x\in[0,h/b]$ for $h\in\{k+1,\dots,b-1\}$
	and $x\in [h/b,1]$ for $h\in\{0,\dots,k\}$. We have $x\in[0,(b-h)/b]$ for
	$h\in\{0,\dots,b-k-1\}$ and $x\in[(b-h)/b,1]$ for $h\in\{b-k,\dots,b-1\}$.
	We distinguish two cases:
	\begin{enumerate}
	  \item Let $k\leq (b-1)/2$. Then we have $k\leq b-k-1$ and therefore we can write
		\begin{align*}
		     \widetilde{\varphi}_b^{\id}(x)=& \sum_{h=0}^{k}h(1-x)(-hx)+\sum_{h=k+1}^{b-k-1}(b-h)x\cdot(-hx)\\
				            &+\sum_{h=b-k}^{b-1}(b-h)x\cdot((b-h)x-(b-h)) \\
										=&\left(bk^2+bk-\frac{b^3}{6}+\frac{b}{6}\right)x^2-\left(\frac{2k^3}{3}+k^2+\frac{k}{3}\right)x=:P_k(x).
		\end{align*}
				
		\item Let $k> (b-1)/2$. Then we have $b-k-1 < k$ and therefore we can write
		\begin{align*}
		     \widetilde{\varphi}_b^{\id}(x)=& \sum_{h=0}^{b-k-1}h(1-x)(-hx)+\sum_{h=b-k}^{k}h(1-x)\cdot ((b-h)x-(b-h))\\
				            &+\sum_{h=k+1}^{b-1}(b-h)x\cdot((b-h)x-(b-h)) \\
										=&\left(bk^2-2b^2k+bk+\frac{5b^3}{6}-b^2+\frac{b}{6}\right)x^2\\
										&+\left(2b^2k-\frac{2k^3}{3}-k^2-\frac{k}{3}-b^3+b^2\right)x \\
										&+\frac{2k^3}{3}-bk^2+k^2-bk+\frac{k}{3}+\frac{b^3}{6}-\frac{b}{6}=:Q_k(x).
		\end{align*}
	\end{enumerate}
	Now we have to consider even and odd bases $b$ separately. For even $b$ we find
		\begin{align*}
		    \int_0^1  \widetilde{\varphi}_b^{\id}(x)\rd x=\sum_{k=0}^{\frac{b}{2}-1}\int_{\frac{k}{b}}^{\frac{k+1}{b}}P_k(x)\rd x+
				   \sum_{k=\frac{b}{2}}^{b-1}\int_{\frac{k}{b}}^{\frac{k+1}{b}}Q_k(x)\rd x=-\frac{1}{90b}-\frac{7b^3}{720}  
		\end{align*}
		and
		\begin{align*}
\sum_{N=1}^{b^j} \widetilde{\varphi}_b^{\id}\left(\frac{N}{b^j}\right)=&\sum_{k=0}^{\frac{b}{2}-1}\sum_{N=kb^{j-1}+1}^{(k+1)b^{j-1}}
				     P_k\left(\frac{N}{b^j}\right)+\sum_{k=\frac{b}{2}}^{b-1}\sum_{N=kb^{j-1}+1}^{(k+1)b^{j-1}}
				     Q_k\left(\frac{N}{b^j}\right)\\ 
						=&-\frac{1}{72b^j}(b^3+2b)+b^j\left(-\frac{1}{90b}-\frac{7b^3}{720}\right)\
		\end{align*}
		whereas for odd bases $b$ we compute analogously
		\begin{align*}
		    \int_0^1  \widetilde{\varphi}_b^{\id}(x)\rd x=\sum_{k=0}^{\frac{b-1}{2}}\int_{\frac{k}{b}}^{\frac{k+1}{b}}P_k(x)\rd x+
				   \sum_{k=\frac{b+1}{2}}^{b-1}\int_{\frac{k}{b}}^{\frac{k+1}{b}}Q_k(x)\rd x=\frac{7}{720b}-\frac{7b^3}{720}  
		\end{align*}
		and
		\begin{align*}\sum_{N=1}^{b^j} \widetilde{\varphi}_b^{\id}\left(\frac{N}{b^j}\right)=&\sum_{k=0}^{\frac{b-1}{2}}\sum_{N=kb^{j-1}+1}^{(k+1)b^{j-1}}
				     P_k\left(\frac{N}{b^j}\right)+\sum_{k=\frac{b+1}{2}}^{b-1}\sum_{N=kb^{j-1}+1}^{(k+1)b^{j-1}}
				     Q_k\left(\frac{N}{b^j}\right)\\ 
						=&-\frac{1}{72b^j}(b^3-b)+b^j\left(\frac{7}{720b}-\frac{7b^3}{720} \right).
		\end{align*}
	It is straightforward now to derive the claimed formula for $\sum_{N=1}^{b^j}\widetilde{\varphi}_b^{\id}\left(\frac{N}{b^j}\right)$.	Since the proofs of the other two identities may be executed analogously, we omit them at this point. 		
\end{proof}
The next lemma generalizes Lemma~\ref{id} to arbitrary permutations $\sigma\in\mathfrak{S}_b$. The main idea of the proof is to reduce the case of general permutations $\sigma$ to the case where $\sigma=\id$. The latter case has been analyzed in the previous lemma already. We advise the reader to consult also the proof of \cite[Lemma 4]{FPPS09}, since we follow closely the ideas there.

\begin{lemma} \label{generalsigma} Let $\sigma\in\mathfrak{S}_b$. For all $j\in\{1,\dots,n\}$ we have
   \begin{eqnarray*}\sum_{N=1}^{b^j}\widetilde{\varphi}_b^{\sigma}\left(\frac{N}{b^j}\right)&=&b^j \left(\int_0^1 \widetilde{\varphi}_b^{\sigma}(x)\rd x+\frac{A_b(j,\id)}{2}\right), \\
	              \sum_{N=1}^{b^j}\widetilde{\varphi}_{b,1}^{\sigma}\left(\frac{N}{b^j}\right)&=&b^j\left( \int_0^1 \widetilde{\varphi}_{b,1}^{\sigma}(x)\rd x+\frac{\overline{A}_b(j,\id)}{2}\right), \\
								\sum_{N=1}^{b^j}\widetilde{\varphi}_{b,2}^{\sigma}\left(\frac{N}{b^j}\right)&=&b^j \left(\int_0^1 \widetilde{\varphi}_{b,2}^{\sigma}(x)\rd x+\frac{\overline{A}_b(j,\id)}{2}\right),\end{eqnarray*}
						where $A_b(j,\id)$ and $\overline{A}_b(j,\id)$ are as defined in Lemma~\ref{id}.
\end{lemma}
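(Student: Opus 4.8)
The plan is to treat each of the three sums as a quadrature error (sum minus integral) and to show that this error depends on $\sigma$ only through the multiset $\{\sigma(0),\dots,\sigma(b-1)\}=\{0,\dots,b-1\}$, so that it must coincide with the error already evaluated for $\sigma=\id$ in Lemma~\ref{id}. Fix $j\in\{1,\dots,n\}$ and let $g$ denote any one of $\widetilde{\varphi}_b^{\sigma}$, $\widetilde{\varphi}_{b,1}^{\sigma}$, $\widetilde{\varphi}_{b,2}^{\sigma}$. By Definition~\ref{deffaure} each $\varphi_{b,h}^{\sigma}$ is linear on every interval $[(k-1)/b,k/b)$, so $g$ is a continuous, piecewise quadratic function with breakpoints among the $k/b$. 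I would first record two facts: $g$ is continuous across the interior breakpoints (a short case distinction on whether $h\leq\sigma(k-1)$), and $g$ vanishes at both endpoints, i.e. $q_1(0)=0$ and $q_b(1^-)=0$, where $q_k$ is the quadratic representing $g$ on the $k$-th interval. The first vanishing is immediate from $\varphi_{b,h}^{\sigma}(0)=0$; the second follows because on the last interval $A([0,h/b);b;\mathcal{Z}_b^{\sigma})=h$, which forces every $\varphi_{b,h}^{\sigma}$ to tend to $0$ as $x\to 1^-$.

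Next I would group the summation over $N$ by the interval containing $N/b^j$: for each $k\in\{1,\dots,b\}$ the indices $N=(k-1)b^{j-1}+1,\dots,kb^{j-1}$ supply the $b^{j-1}$ equally spaced points $x_m=(k-1)/b+m/b^j$ of the $k$-th interval, on which $g$ equals the single quadratic $q_k$. Since Euler--Maclaurin summation is exact for quadratics, each block satisfies
\[ \sum_{m=1}^{b^{j-1}} q_k(x_m)=b^j\int_{(k-1)/b}^{k/b} q_k(x)\rd x+\frac{q_k(k/b)-q_k((k-1)/b)}{2}+\frac{1}{12b^j}\left(q_k'(k/b)-q_k'((k-1)/b)\right). \]
Summing over $k$, the integrals reassemble $b^j\int_0^1 g(x)\rd x$; the middle terms telescope by continuity to $(g(1)-g(0))/2=0$; and, writing $a_k$ for the leading coefficient of $q_k$, the last terms collapse to $\tfrac{1}{6b^{j+1}}\sum_{k=1}^{b}a_k$, using $q_k'(k/b)-q_k'((k-1)/b)=2a_k/b$. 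Thus $\sum_{N=1}^{b^j}g(N/b^j)=b^j\int_0^1 g+\tfrac{1}{6b^{j+1}}\sum_{k=1}^{b}a_k$, and everything reduces to identifying $\sum_k a_k$.

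The decisive observation is that on the $k$-th interval the slope of $\varphi_{b,h}^{\sigma}$ equals $-h$ if $h\leq\sigma(k-1)$ and $b-h$ otherwise, so it depends on $\sigma$ only through the single value $\sigma(k-1)$; likewise the slope of $\varphi_{b,h}^{\overline{\sigma}}$ is governed by $\overline{\sigma}(k-1)=b-1-\sigma(k-1)$. Since $a_k$ is the sum over $h$ of products of the two relevant slopes, it is a function of $\sigma(k-1)$ alone, say $a_k=a(\sigma(k-1))$. As $k$ runs through $\{1,\dots,b\}$ the argument $\sigma(k-1)$ runs through all of $\{0,\dots,b-1\}$, whence $\sum_{k=1}^{b}a_k=\sum_{s=0}^{b-1}a(s)$ is independent of $\sigma$ and equals its value for $\sigma=\id$. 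Therefore the quadrature error $\tfrac{1}{6b^{j+1}}\sum_k a_k$ is the same for $\sigma$ as for $\id$, which by Lemma~\ref{id} equals $b^j A_b(j,\id)/2$ when $g=\widetilde{\varphi}_b^{\sigma}$ and $b^j\overline{A}_b(j,\id)/2$ when $g=\widetilde{\varphi}_{b,1}^{\sigma}$ or $g=\widetilde{\varphi}_{b,2}^{\sigma}$. This delivers all three formulas simultaneously.

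I expect the permutation-invariance step to be the main obstacle: the point is to recognise that the Euler--Maclaurin correction sees the quadratic pieces only through their leading coefficients, and that these leading coefficients form the same multiset for every $\sigma$. The supporting technical points — continuity of $\varphi_{b,h}^{\sigma}$ across breakpoints, so that the middle terms genuinely telescope, and the vanishing of $g$ at $0$ and $1$, so that no constant remainder survives — are routine but must be verified; both follow from the counting interpretation of the terms $A([\cdot,\cdot);k;\mathcal{Z}_b^{\sigma})$ in Definition~\ref{deffaure}, and this is exactly the reduction to the case $\sigma=\id$ carried out in \cite[Lemma 4]{FPPS09}.
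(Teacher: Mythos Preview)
Your proposal is correct and follows essentially the same route as the paper. The paper invokes the Simpson-type quadrature identity from \cite[Lemma~4]{FPPS09} (valid for any $1$-periodic, piecewise-quadratic function) to write the sum as $b^j$ times the integral plus a correction $A_b(j,\sigma)$ built from the one-sided derivative jumps of $g$ at the breakpoints $k/b$, and then shows $A_b(j,\sigma)=A_b(j,\id)$ via the relation $(\varphi_{b,h}^{\sigma})'(l/b+0)=(\varphi_{b,h}^{\id})'(\sigma(l)/b+0)$; your Euler--Maclaurin computation and your observation that the leading coefficient $a_k$ depends only on $\sigma(k-1)$ are exactly the same two steps in a slightly different packaging.
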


\begin{proof} It was shown in the proof of \cite[Lemma 4]{FPPS09}, by applying Simpson's quadrature rule, that
    $$ \sum_{N=1}^{b^j}\varphi_b^{\sigma,(2)}\left(\frac{N}{b^j}\right)=b^j\left(\int_0^1 \varphi_b^{\sigma,(2)}(x)\rd x+\frac{A_b(j,\sigma)}{2}\right) $$
    with
		   $$ A_b(j,\sigma)=\frac{1}{6b^{2j}}\sum_{k=1}^{b}\left(\big(\varphi_b^{\sigma,(2)}\big)'\left(\frac{k}{b}-0\right)-\big(\varphi_b^{\sigma,(2)}\big)'\left(\frac{k}{b}+0\right)\right), $$
		where here and later on by $f'(x-0)$ we mean the left-derivative and by $f'(x+0)$ the right-derivative of the function
$f$ at $x$ .
    The only properties of $\varphi_b^{\sigma,(2)}$ the authors needed to show this identity are the fact that $\varphi_b^{\sigma,(2)}$ is quadratic on intervals $[k/b,(k+1)/b]$ as well as the $1$-periodicity of this function. Since $\widetilde{\varphi}_b^{\sigma}$ has these two properties as well, an analogous relation is also true for $\widetilde{\varphi}_b^{\sigma}$. 
    Now we need the definition $\widetilde{\varphi}_b^{\sigma}=\sum_{h=0}^{b-1}\varphi_{b,h}^{\sigma}\varphi_{b,h}^{\overline{\sigma}}$ to deduce
    \begin{align*}
       &\big(\widetilde{\varphi}_b^{\sigma}\big)'\left(\frac{k}{b}-0\right)-\big(\widetilde{\varphi}_b^{\sigma}\big)'\left(\frac{k}{b}+0\right) \\  =&\sum_{h=0}^{b-1}\left\{\varphi_{b,h}^{\sigma}\left(\frac{k}{b}\right)\big(\varphi_{b,h}^{\overline{\sigma}}\big)'\left(\frac{k}{b}-0\right)+\varphi_{b,h}^{\overline{\sigma}}\left(\frac{k}{b}\right)\big(\varphi_{b,h}^{\sigma}\big)'\left(\frac{k}{b}-0\right) \right. \\ &\hspace{1cm}- \left.  \varphi_{b,h}^{\sigma}\left(\frac{k}{b}\right)\big(\varphi_{b,h}^{\overline{\sigma}}\big)'\left(\frac{k}{b}+0\right)-\varphi_{b,h}^{\overline{\sigma}}\left(\frac{k}{b}\right)\big(\varphi_{b,h}^{\sigma}\big)'\left(\frac{k}{b}+0\right)
       \right\} \\ =&\sum_{h=0}^{b-1}\varphi_{b,h}^{\sigma}\left(\frac{k}{b}\right)\left(\big(\varphi_{b,h}^{\overline{\sigma}}\big)'\left(\frac{k-1}{b}+0\right)-\big(\varphi_{b,h}^{\overline{\sigma}}\big)'\left(\frac{k}{b}+0\right)\right) \\
&+\sum_{h=0}^{b-1}\varphi_{b,h}^{\overline{\sigma}}\left(\frac{k}{b}\right)\left(\big(\varphi_{b,h}^{\sigma}\big)'\left(\frac{k-1}{b}+0\right)-\big(\varphi_{b,h}^{\sigma}\big)'\left(\frac{k}{b}+0\right)\right)=S_1+S_2.
    \end{align*} 
We define $f_{h,k}:=\big(\varphi_{b,h}^{\sigma}\big)'\left(\frac{k}{b}+0\right)$ and $\overline{f}_{h,k}:=\big(\varphi_{b,h}^{\overline{\sigma}}\big)'\left(\frac{k}{b}+0\right)$.
From the linearity of $\varphi_{b,h}^{\sigma}$ and $\varphi_{b,h}^{\overline{\sigma}}$ on $[k/b,(k+1)/b]$ we have
$\varphi_{b,h}^{\sigma}(k/b)=\int_0^{k/b}\big(\varphi_{b,h}^{\sigma}\big)'(x)\rd x=\frac{1}{b}\sum_{l=0}^{k-1}f_{h,l}$ and
also $\varphi_{b,h}^{\overline{\sigma}}(k/b)=\frac{1}{b}\sum_{l=0}^{k-1}\overline{f}_{h,l}$. For $k=b$, this yields $\sum_{l=0}^{b-1}f_{h,l}=\sum_{l=0}^{b-1}\overline{f}_{h,l}=0$. Hence for every $h\in\{0,\dots,b-1\}$ we have
$$ \sum_{k=1}^b S_1=\frac{1}{b}\sum_{l=0}^{b-1}f_{h,l}\sum_{k=l+1}^{b}(\overline{f}_{h,k-1}-\overline{f}_{h,k})=\frac{1}{b}\sum_{l=0}^{b-1}f_{h,l}\overline{f}_{h,l} $$
and analogously
$$ \sum_{k=1}^b S_2=\frac{1}{b}\sum_{l=0}^{b-1}\overline{f}_{h,l}\sum_{k=l+1}^{b}(f_{h,k-1}-f_{h,k})=\frac{1}{b}\sum_{l=0}^{b-1}\overline{f}_{h,l}f_{h,l}=\sum_{k=1}^b S_1. $$
Finally we conclude
\begin{align*}
     A_b(j,\sigma)=&\frac{1}{3b^{2j}}\sum_{h=0}^{b-1}\sum_{l=0}^{b-1}f_{h,l}\overline{f}_{h,l}             =\frac{1}{3b^{2j}}\sum_{h=0}^{b-1}\sum_{l=0}^{b-1}\left(\big(\varphi_{b,h}^{\sigma}\big)'\left(\frac{l}{b}+0\right)\big(\varphi_{b,h}^{\overline{\sigma}}\big)'\left(\frac{l}{b}+0\right)\right) \\ &=\frac{1}{3b^{2j}}\sum_{h=0}^{b-1}\sum_{l=0}^{b-1}\left(\big(\varphi_{b,h}^{\id}\big)'\left(\frac{\sigma(l)}{b}+0\right)\big(\varphi_{b,h}^{\tau}\big)'\left(\frac{\sigma(l)}{b}+0\right)\right) \\
&=\frac{1}{3b^{2j}}\sum_{h=0}^{b-1}\sum_{l=0}^{b-1}\left(\big(\varphi_{b,h}^{\id}\big)'\left(\frac{l}{b}+0\right)\big(\varphi_{b,h}^{\tau}\big)'\left(\frac{l}{b}+0\right)\right)=A_b(j,\id),   
\end{align*}
where we used the relations
\begin{equation*} \label{vertid} \big(\varphi_{b,h}^{\sigma}\big)'\left(\frac{l}{b}+0\right)=\big(\varphi_{b,h}^{\id}\big)'\left(\frac{\sigma(l)}{b}+0\right)  \end{equation*}
and
\begin{equation*} \label{verttau} \big(\varphi_{b,h}^{\overline{\sigma}}\big)'\left(\frac{l}{b}+0\right)=\big(\varphi_{b,h}^{\tau}\big)'\left(\frac{\sigma(l)}{b}+0\right).  \end{equation*}
They follow both directly from the definition of $\varphi_{b,h}^{\sigma}$. The first relation has also been used in \cite{FP,FPPS09}. The proof of the first claim of this lemma is complete. Since the other two identities may be proven completely analogously, we omit an explicit proof.
\end{proof}

Now we are ready to show the main lemma of this paper. We will combine Lemmas~\ref{ugly},~\ref{id} and~\ref{generalsigma} to obtain this result. 

\begin{lemma} \label{main} Let $\sigma\in\mathfrak{S}_b$. Then we have for even bases $b$
\begin{align*}
  \frac{2}{b^{2n}}&\sum_{j=1}^{n}\sum_{\lambda,N=1}^{b^n}\varphi_{b,\varepsilon_j(\lambda,N,\Sigma)}^{\sigma}\left(\frac{N}{b^j}\right)\varphi_{b,\varepsilon_j(\lambda,N,\Sigma^{\ast})}^{\overline{\sigma}}\left(\frac{N}{b^j}\right) \\
	 =& n\left(\widetilde{\Phi}_b^{\sigma}+\frac12 \widetilde{\Phi}_{b,1}^{\sigma}+\frac12 \widetilde{\Phi}_{b,2}^{\sigma} \right)
	+\left(\widetilde{\Phi}_b^{\sigma}-\frac12\widetilde{\Phi}_{b,1}^{\sigma}-\frac12\widetilde{\Phi}_{b,2}^{\sigma}\right)-\frac{1}{36}-\frac{1}{18b^{2n}}.
\end{align*}
and for odd bases $b$
\begin{align*} \frac{2}{b^{2n}}&\sum_{j=1}^{n}\sum_{\lambda,N=1}^{b^n}\varphi_{b,\varepsilon_j(\lambda,N,\Sigma)}^{\sigma}\left(\frac{N}{b^j}\right)\varphi_{b,\varepsilon_j(\lambda,N,\Sigma^{\ast})}^{\overline{\sigma}}\left(\frac{N}{b^j}\right) \\
	 =& n\left(\widetilde{\Phi}_b^{\sigma}+\frac12 \widetilde{\Phi}_{b,1}^{\sigma}+\frac12 \widetilde{\Phi}_{b,2}^{\sigma} \right)+\left(-\frac{1}{36}+\frac{b^2}{b^2-1}\left(\widetilde{\Phi}_b^{\sigma}-\frac12\widetilde{\Phi}_{b,1}^{\sigma}-\frac12 \widetilde{\Phi}_{b,2}^{\sigma}\right)\right)\left(1-\frac{1}{b^{2n}}\right).
\end{align*}
\end{lemma}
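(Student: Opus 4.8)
The plan is to evaluate the triple sum from inside out: first the summation over $\lambda$, then over $N$, and finally over $j$. The innermost sum over $\lambda$ is precisely what Lemma~\ref{ugly} delivers: for each fixed $j\in\{1,\dots,n-1\}$ and each $N$ it replaces $\sum_{\lambda=1}^{b^n}\varphi_{b,\varepsilon_j(\lambda,N,\Sigma)}^{\sigma}(N/b^j)\varphi_{b,\varepsilon_j(\lambda,N,\Sigma^{\ast})}^{\overline{\sigma}}(N/b^j)$ by a closed expression involving $\widetilde{\varphi}_b^{\sigma}(N/b^j)$, $\widetilde{\varphi}_{b,1}^{\sigma}(N/b^j)$, $\widetilde{\varphi}_{b,2}^{\sigma}(N/b^j)$ and the quantity $\nu_j(N,\Sigma)$, split according to whether $\nu_j(N,\Sigma)$ lies below or above $(b^{n-j}-1)/2$. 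The case $j=n$ is treated separately and contributes only the single term $b^{n-1}\widetilde{\varphi}_b^{\sigma}(N/b^n)$.

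The key structural observation I would exploit next is that the two factors surviving after Lemma~\ref{ugly} depend on disjoint blocks of digits of $N$. Writing $N=Mb^j+R$ with $0\le R<b^j$ and $0\le M<b^{n-j}$, periodicity of the Faure functions shows that $\widetilde{\varphi}_b^{\sigma}(N/b^j)$, $\widetilde{\varphi}_{b,1}^{\sigma}(N/b^j)$ and $\widetilde{\varphi}_{b,2}^{\sigma}(N/b^j)$ depend only on $R$, whereas $\nu_j(N,\Sigma)$ depends only on $M$. Since each component $\sigma_i\in\{\sigma,\overline{\sigma}\}$ is a permutation of the digits, the map $M\mapsto\nu_j(N,\Sigma)$ is a bijection of $\{0,\dots,b^{n-j}-1\}$ onto itself. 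Hence the sum over $N$ factorises into a sum over $R$ and a sum over $\nu:=\nu_j(N,\Sigma)\in\{0,\dots,b^{n-j}-1\}$, and the endpoint indices $N\in\{0,b^n\}$ contribute nothing because $\widetilde{\varphi}_b^{\sigma}(0)=0$.

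I would then carry out the $\nu$-summation. The term $b^{n-1}\widetilde{\varphi}_b^{\sigma}(R/b^j)$ is independent of $\nu$ and merely contributes a factor $b^{n-j}$. For the rest I split the range of $\nu$ at $(b^{n-j}-1)/2$ and evaluate the two elementary arithmetic progressions $\sum(b^{n-j}-1-2\nu)$ over the lower half (attached to $\widetilde{\varphi}_{b,1}^{\sigma}$) and $\sum(2\nu+1-b^{n-j})$ over the upper half (attached to $\widetilde{\varphi}_{b,2}^{\sigma}$). Both sums are equal by the symmetry $\nu\mapsto b^{n-j}-1-\nu$ and evaluate to $b^{2(n-j)}/4$ for even $b$ and to $(b^{2(n-j)}-1)/4$ for odd $b$; this is where the parity of $b$ first enters. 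The subsequent sum over $R$ is supplied by Lemma~\ref{generalsigma}, whose correction terms $A_b(j,\id)/2$ and $\overline{A}_b(j,\id)/2$ turn the $R$-sums into $\widetilde{\Phi}_b^{\sigma}$, $\widetilde{\Phi}_{b,1}^{\sigma}$, $\widetilde{\Phi}_{b,2}^{\sigma}$ up to these explicit corrections, using that $\sum_{R=0}^{b^j-1}f(R/b^j)=\sum_{N=1}^{b^j}f(N/b^j)$ for $f\in\{\widetilde{\varphi}_b^{\sigma},\widetilde{\varphi}_{b,1}^{\sigma},\widetilde{\varphi}_{b,2}^{\sigma}\}$, since these functions vanish at $0$ and at $1$.

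Finally I would sum over $j=1,\dots,n$ and multiply by $2/b^{2n}$. The terms carrying $\widetilde{\Phi}_b^{\sigma}$ collect into $2n\widetilde{\Phi}_b^{\sigma}$, while the leading parts of the $\widetilde{\Phi}_{b,1}^{\sigma},\widetilde{\Phi}_{b,2}^{\sigma}$ contributions, combined with the $-2\widetilde{\Phi}_b^{\sigma}$ from the subtraction in Lemma~\ref{ugly}, reorganise into the claimed coefficient $n(\widetilde{\Phi}_b^{\sigma}+\tfrac12\widetilde{\Phi}_{b,1}^{\sigma}+\tfrac12\widetilde{\Phi}_{b,2}^{\sigma})$ plus the $n$-independent $\widetilde{\Phi}$-term. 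The purely numerical constants arise from summing the geometric series $\sum_j b^{-2j}$ against the explicit values of $A_b(j,\id)$ and $\overline{A}_b(j,\id)$ from Lemma~\ref{id}, where I would use $\overline{A}_b(j,\id)-A_b(j,\id)=\tfrac{1}{6b^{2j-1}}$ for even $b$ and $\overline{A}_b(j,\id)-A_b(j,\id)=0$ for odd $b$. The main obstacle is exactly this last bookkeeping: the two parities produce the two structurally different tails $-\tfrac{1}{36}-\tfrac{1}{18b^{2n}}$ and $(-\tfrac{1}{36}+\tfrac{b^2}{b^2-1}(\widetilde{\Phi}_b^{\sigma}-\tfrac12\widetilde{\Phi}_{b,1}^{\sigma}-\tfrac12\widetilde{\Phi}_{b,2}^{\sigma}))(1-b^{-2n})$, and keeping all the geometric-series remainders in order so that they actually telescope into these closed forms is the delicate part of the argument.
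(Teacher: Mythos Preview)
Your proposal is correct and follows essentially the same approach as the paper: apply Lemma~\ref{ugly} for the $\lambda$-sum, separate the high and low base-$b$ digits of $N$ so that the $\nu_j$-dependence and the $\widetilde{\varphi}$-dependence decouple, evaluate the resulting arithmetic progressions in $\nu$, invoke Lemma~\ref{generalsigma} for the remaining $R$-sum, and finish with the geometric series in $j$. Your explicit bijection $M\mapsto\nu_j(N,\Sigma)$ is exactly what the paper uses implicitly when it replaces $\sum_{N_j,\dots,N_{n-1}}$ subject to $\nu_j=\ell$ by a single term for each $\ell$, and your computed values $b^{2(n-j)}/4$ and $(b^{2(n-j)}-1)/4$ as well as $\overline{A}_b(j,\id)-A_b(j,\id)$ match the paper's intermediate expressions.
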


\begin{proof} At first we remark that for $N=N_{n-1}b^{n-1}+\dots +N_0$ the number $\nu_j(N,\Sigma)$ depends only on the digits 
$N_j,\dots,N_{n-1}$, which follows directly from its definition in Lemma~\ref{allgemein}. On the other hand, the values
of $\widetilde{\varphi}_b^{\sigma}\left(\frac{N}{b^j}\right)$, $\widetilde{\varphi}_{b,1}^{\sigma}\left(\frac{N}{b^j}\right)$ and $\widetilde{\varphi}_{b,2}^{\sigma}\left(\frac{N}{b^j}\right)$ depend only on the digits $N_0,\dots,N_{j-1}$. This can be seen from the 1-periodicity of these functions, since
\begin{align*} \widetilde{\varphi}_b^{\sigma}\left(\frac{N}{b^j}\right)=&\widetilde{\varphi}_b^{\sigma}\left(\left\{\frac{N}{b^j}\right\}\right)=\widetilde{\varphi}_b^{\sigma}\left(\left\{N_{n-1}b^{n-j-1}+\dots+N_j+N_{j-1}b^{-1}+\dots+N_0b^{-j}\right\}\right)\\ =& \widetilde{\varphi}_b^{\sigma}\left(N_{j-1}b^{-1}+\dots+N_0b^{-j}\right)
\end{align*}
and analogously for $\widetilde{\varphi}_{b,1}^{\sigma}$ and $\widetilde{\varphi}_{b,2}^{\sigma}$.
We set $f_b(j):=\left\lfloor(b^{n-j}-1)/2\right\rfloor$. Lemma~\ref{ugly} leads to
\begin{align*}
   \sum_{j=1}^{n}&\sum_{\lambda,N=1}^{b^n}\varphi_{b,\varepsilon_j(\lambda,N,\Sigma)}^{\sigma}\left(\frac{N}{b^j}\right)\varphi_{b,\varepsilon_j(\lambda,N,\Sigma^{\ast})}^{\overline{\sigma}}\left(\frac{N}{b^j}\right) \\
	=&\sum_{j=1}^{n-1} \left\{ \sum_{\ell=0}^{f_b(j)}\sum_{\substack{N_j,\dots,N_{n-1}=0 \\ \nu_j(N,\Sigma)=\ell}}^{b-1}
	     \sum_{N_0,\dots,N_{j-1}=0}^{b-1}\Bigg(b^{n-1}\widetilde{\varphi}_b^{\sigma}\left(\frac{N}{b^j}\right) \right. \\ &\hspace{5cm}+ \left.b^{j-1}(b^{n-j}-1-2\ell)\left(\widetilde{\varphi}_{b,1}^{\sigma}\left(\frac{N}{b^j}\right)-\widetilde{\varphi}_b^{\sigma}\left(\frac{N}{b^j}\right)\right)\Bigg) \right. \\ &+ \left. \sum_{\ell=f_b(j)+1}^{b^{n-j}-1}\sum_{\substack{N_j,\dots,N_{n-1}=0 \\ \nu_j(N,\Sigma)=\ell}}^{b-1}
	     \sum_{N_0,\dots,N_{j-1}=0}^{b-1}\Bigg(b^{n-1}\widetilde{\varphi}_b^{\sigma}\left(\frac{N}{b^j}\right)\right. \\ &\hspace{5cm}+ \left.b^{j-1}(2\ell+1-b^{n-j})\left(\widetilde{\varphi}_{b,2}^{\sigma}\left(\frac{N}{b^j}\right)-\widetilde{\varphi}_b^{\sigma}\left(\frac{N}{b^j}\right)\right)\Bigg) \right\} \\
		&+ \sum_{N=1}^{b^n}b^{n-1}\widetilde{\varphi}_b^{\sigma}\left(\frac{N}{b^n}\right) \\
	=&\sum_{j=1}^{n-1} \left\{ \sum_{\ell=0}^{f_b(j)}
	     \sum_{N=1}^{b^j}\Bigg(b^{n-1}\widetilde{\varphi}_b^{\sigma}\left(\frac{N}{b^j}\right)+b^{j-1}(b^{n-j}-1-2\ell)\left(\widetilde{\varphi}_{b,1}^{\sigma}\left(\frac{N}{b^j}\right)-\widetilde{\varphi}_b^{\sigma}\left(\frac{N}{b^j}\right)\right)\Bigg) \right. \\ &+ \left. \sum_{\ell=f_b(j)+1}^{b^{n-j}-1}
	     \sum_{N=1}^{b^j}\Bigg(b^{n-1}\widetilde{\varphi}_b^{\sigma}\left(\frac{N}{b^j}\right)+b^{j-1}(2\ell+1-b^{n-j})\left(\widetilde{\varphi}_{b,2}^{\sigma}\left(\frac{N}{b^j}\right)-\widetilde{\varphi}_b^{\sigma}\left(\frac{N}{b^j}\right)\right)\Bigg) \right\} \\
		&+ \sum_{N=1}^{b^n}b^{n-1}\widetilde{\varphi}_b^{\sigma}\left(\frac{N}{b^n}\right) 
\end{align*}
At this point we need to treat the cases of even and odd bases $b$ separately. Let us first consider even bases.
Then we have $f_b(j)=b^{n-j}/2-1$. With Lemma~\ref{generalsigma} we get
\begin{align*}
   \sum_{j=1}^{n}&\sum_{\lambda,N=1}^{b^n}\varphi_{b,\varepsilon_j(\lambda,N,\Sigma)}^{\sigma}\left(\frac{N}{b^j}\right)\varphi_{b,\varepsilon_j(\lambda,N,\Sigma^{\ast})}^{\overline{\sigma}}\left(\frac{N}{b^j}\right) \\
	=&\sum_{j=1}^{n-1} \left\{ b^{2n-j-1} b^j\left(b\widetilde{\Phi}_b^{\sigma}+\frac{A_b(j,\id)}{2}\right) \right. \\
	  &+ \left. b^{j-1}\sum_{\ell=0}^{b^{n-j}/2-1}
	    (b^{n-j}-1-2\ell)b^j\left(b\widetilde{\Phi}_{b,1}^{\sigma}+\frac{\overline{A}_b(j,\id)}{2}-b\widetilde{\Phi}_{b}^{\sigma}-\frac{A_b(j,\id)}{2}\right) \right. \\ &+  \left. b^{j-1}\sum_{\ell=b^{n-j}/2}^{b^{n-j}-1}
	    (2\ell+1-b^{n-j})b^j\left(b\widetilde{\Phi}_{b,2}^{\sigma}+\frac{\overline{A}_b(j,\id)}{2}-b\widetilde{\Phi}_{b}^{\sigma}-\frac{A_b(j,\id)}{2}\right) \right\} \\
		&+ \sum_{N=1}^{b^n}b^{n-1}\widetilde{\varphi}_b^{\sigma}\left(\frac{N}{b^n}\right) \\
	=&\sum_{j=1}^{n-1} \left\{ b^{2n-1}\left(b\widetilde{\Phi}_b^{\sigma}-\frac{1}{72b^{2j}}(b^3+2b)\right) + \frac14 b^{2n-1}\left(b\widetilde{\Phi}_{b,1}^{\sigma}-b\widetilde{\Phi}_{b}^{\sigma}+\frac{1}{12b^{2j-1}}\right) \right. \\ &+  \left. \frac14 b^{2n-1}\left(b\widetilde{\Phi}_{b,2}^{\sigma}-b\widetilde{\Phi}_{b}^{\sigma}+\frac{1}{12b^{2j-1}}\right) \right\} + b^{2n-1}\left(b\widetilde{\Phi}_b^{\sigma}-\frac{1}{72b^{2n}}(b^3+2b)\right). \\
\end{align*}
Now a straightforward calculation yields the claimed result for even bases $b$. For odd bases $b$ we have
$f_b(j)=(b^{n-j}-1)/2$ and hence we obtain similarly as above
\begin{align*}
   \sum_{j=1}^{n}&\sum_{\lambda,N=1}^{b^n}\varphi_{b,\varepsilon_j(\lambda,N,\Sigma)}^{\sigma}\left(\frac{N}{b^j}\right)\varphi_{b,\varepsilon_j(\lambda,N,\Sigma^{\ast})}^{\overline{\sigma}}\left(\frac{N}{b^j}\right) \\
	=&\sum_{j=1}^{n-1} \left\{ b^{2n-j-1} b^j\left(b\widetilde{\Phi}_b^{\sigma}+\frac{A_b(j,\id)}{2}\right) + b^{j-1}\sum_{\ell=0}^{(b^{n-j}-1)/2}
	    (b^{n-j}-1-2\ell)b^j\left(b\widetilde{\Phi}_{b,1}^{\sigma}-b\widetilde{\Phi}_{b}^{\sigma})\right) \right. \\ &+  \left. b^{j-1}\sum_{\ell=(b^{n-j}+1)/2}^{b^{n-j}-1}
	    (2\ell+1-b^{n-j})b^j\left(b\widetilde{\Phi}_{b,2}^{\sigma}-b\widetilde{\Phi}_{b}^{\sigma}\right) \right\} 
		+ \sum_{N=1}^{b^n}b^{n-1}\widetilde{\varphi}_b^{\sigma}\left(\frac{N}{b^n}\right) \\
	=&\sum_{j=1}^{n-1} \left\{ b^{2n-1}\left(b\widetilde{\Phi}_b^{\sigma}-\frac{1}{72b^{2j}}(b^3-b)\right) + \frac{1}{4b}(b^{2n}-b^{2j})\left(b\widetilde{\Phi}_{b,1}^{\sigma}-b\widetilde{\Phi}_{b}^{\sigma}\right) \right. \\ &+  \left. \frac{1}{4b}(b^{2n}-b^{2j})\left(b\widetilde{\Phi}_{b,2}^{\sigma}-b\widetilde{\Phi}_{b}^{\sigma}\right) \right\} + b^{2n-1}\left(b\widetilde{\Phi}_b^{\sigma}-\frac{1}{72b^{2n}}(b^3-b)\right). 
\end{align*}
The rest of the proof is again a matter of elementary calculations.
\end{proof}

\begin{remark} \label{simpleform} \rm Tedious computations, similar to those we needed to prove Lemma~\ref{altern}, yield for $\sigma\in\mathcal{A}_b(\tau)$ the relation
  $$ \widetilde{\Phi}_b^{\sigma}-\frac12\widetilde{\Phi}_{b,1}^{\sigma}-\frac12 \widetilde{\Phi}_{b,2}^{\sigma}
       = \begin{cases}
                -\frac{1}{24} & \mbox{if  } b \mbox{  is even  }, \\
                -\frac{1}{24}\frac{b^2-1}{b^2} & \mbox{if  } b \mbox{  is odd  }.
         \end{cases} $$
     In this case, Lemma~\ref{main} can be displayed in a much simplier form, namely
     \begin{align*} \frac{2}{b^{2n}}&\sum_{j=1}^{n}\sum_{\lambda,N=1}^{b^n}\varphi_{b,\varepsilon_j(\lambda,N,\Sigma)}^{\sigma}\left(\frac{N}{b^j}\right)\varphi_{b,\varepsilon_j(\lambda,N,\Sigma^{\ast})}^{\overline{\sigma}}\left(\frac{N}{b^j}\right) \\
     &=n\left(\widetilde{\Phi}_b^{\sigma}+\frac12 \widetilde{\Phi}_{b,1}^{\sigma}+\frac12 \widetilde{\Phi}_{b,2}^{\sigma} \right)
         -\frac{5}{72}+\frac{1-9\cdot (-1)^b}{144b^{2n}}. \end{align*}
\end{remark}

\section{Numerical results} \label{numerical}

We avoid all the proofs in this section, since they contain elementary, but very lengthy and technical calculations. \\
The constant $c_b^{\sigma}$ which appears in Theorem~\ref{theo} is rather hard to compute. We therefore present an alternative formula in the subsequent lemma.

\begin{lemma} \label{altern}
   Let $n\in\NN$, $\sigma\in\mathcal{A}_b(\tau)$ and $\Sigma\in\{\sigma,\osigma\}^n$. Then we have
	 $$ \lim_{n\to\infty}\frac{2b^n\,L_2(\cR_{b,n}^{\Sigma,\sym})}{\sqrt{\log{(2b^n)}}}=\sqrt{\frac{c_b^{\sigma}}{\log{b}}}, $$
	where
	\begin{align*}
	   c_b^{\sigma}=&2\Phi_b^{\sigma,(2)}+\widetilde{\Phi}_b^{\sigma}+\frac12 \widetilde{\Phi}_{b,1}^{\sigma}+\frac12 \widetilde{\Phi}_{b,2}^{\sigma} \\ =&\frac{16-12b-111b^2+228b^3-112b^4}{72b^2}-\frac{1-(-1)^b}{16b^3} \\
		       &+\frac{4}{b^3}\sum_{k_1,k_2=0}^{b-1}\max(\sigma(k_1),\sigma(k_2))\left(\frac{b}{2}\big(\max(k_1,k_2)+\max(k_1+k_2,b-1)\big)-k_1^2-k_1\right). 
	\end{align*}
\end{lemma}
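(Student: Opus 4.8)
The plan is to treat the two assertions of the lemma separately: the asymptotic identity follows almost immediately from Theorem~\ref{theo}, while the closed form for $c_b^\sigma$ requires an explicit evaluation of the four integrals entering its definition. For the limit I would start from the exact formula
\[
\left(2b^n L_2(\cR_{b,n}^{\Sigma,\sym})\right)^2 = n\,c_b^\sigma + \frac{11}{8} + \frac{1}{b^n} + \frac{1-9(-1)^b}{144\,b^{2n}},
\]
provided by Theorem~\ref{theo}. Since $\log(2b^n)=\log 2 + n\log b$, dividing by $\log(2b^n)$ leaves a ratio whose numerator grows linearly in $n$ with leading coefficient $c_b^\sigma$ and whose denominator grows linearly with leading coefficient $\log b$; the bounded remainder $\tfrac{11}{8}+b^{-n}+O(b^{-2n})$ is negligible. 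Hence the ratio of squares tends to $c_b^\sigma/\log b$, and since the left-hand side $2b^nL_2/\sqrt{\log(2b^n)}$ is nonnegative, continuity of the square root yields $\sqrt{c_b^\sigma/\log b}$. This step is routine.

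The substance is the second formula. First I would record the breakpoint values of the Faure functions of Definition~\ref{deffaure}. As $\varphi_{b,h}^\sigma$ is piecewise linear with $\varphi_{b,h}^\sigma(0)=0$, and its slope on $[l/b,(l+1)/b]$ equals $b-h$ when $\sigma(l)<h$ and $-h$ otherwise, summing the slopes gives
\[
\varphi_{b,h}^\sigma\!\left(\frac{k}{b}\right) = N_h(k) - \frac{hk}{b}, \qquad N_h(k) := \#\{\,0\le l<k : \sigma(l)<h\,\}.
\]
Because every integrand in $\Phi_b^{\sigma,(2)}$, $\widetilde{\Phi}_b^\sigma$, $\widetilde{\Phi}_{b,1}^\sigma$, $\widetilde{\Phi}_{b,2}^\sigma$ is a product of two such piecewise linear functions, I would integrate exactly on each subinterval via the elementary identity
\[
\int_{(k-1)/b}^{k/b} f\,g \,\rd x = \frac{1}{6b}\left(2f_{k-1}g_{k-1}+f_{k-1}g_k+f_kg_{k-1}+2f_kg_k\right), \qquad f_k:=f(k/b),
\]
valid whenever $f,g$ are linear there. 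Summing over $k$ reduces each of the four quantities to an explicit finite sum in the breakpoint values $N_h(k)-hk/b$.

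Next I would invoke the relation $\varphi_{b,h}^{\overline{\sigma}}=-\varphi_{b,b-h}^\sigma$ from Lemma~\ref{previous} to rewrite $\widetilde{\Phi}_b^\sigma$, $\widetilde{\Phi}_{b,1}^\sigma$ and $\widetilde{\Phi}_{b,2}^\sigma$ purely in terms of $\varphi_{b,\cdot}^\sigma$, so that all of $c_b^\sigma$ becomes a sum over $h$ and breakpoints of products of terms $N_h(k)-hk/b$. The decisive manoeuvre is to interchange the order of summation and sum over $h$ first: the indicator structure of $N_h(k)$ collapses through identities such as
\[
\sum_{h=0}^{b-1}\mathbf{1}[\sigma(k_1)<h]\,\mathbf{1}[\sigma(k_2)<h] = (b-1)-\max(\sigma(k_1),\sigma(k_2)),
\]
and the analogous reflected count $\sum_h \mathbf{1}[\sigma(k_1)<h]\,\mathbf{1}[\sigma(k_2)<b-h]$, which produces comparisons of $\sigma(k_1)+\sigma(k_2)$ with $b-1$. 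These are exactly the mechanisms generating the factors $\max(\sigma(k_1),\sigma(k_2))$ and $\max(k_1+k_2,b-1)$, while the breakpoint constraints $l<k$ summed over $k$ yield $\max(k_1,k_2)$; the arithmetic contributions of the $hk/b$ terms assemble into the $\sigma$-free rational function in $b$.

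The main obstacle will be the bookkeeping in this final assembly. One must combine the four pieces with their prescribed weights $2,1,\tfrac12,\tfrac12$, cleanly separate the $\sigma$-dependent double sum from the $\sigma$-free part, and recover the exact coefficients of $\frac{16-12b-111b^2+228b^3-112b^4}{72b^2}$ together with the parity correction $-\frac{1-(-1)^b}{16b^3}$. The parity term arises from sums $\sum_k(\cdots)$ in which a central index behaves differently for even and odd $b$ (equivalently from the floor $\lfloor(b-1)/2\rfloor$ that already appeared in Lemma~\ref{main}). Finally, the hypothesis $\sigma\in\mathcal{A}_b(\tau)$, i.e.\ $\sigma(b-1-k)=b-1-\sigma(k)$, is what allows $\widetilde{\Phi}_{b,1}^\sigma$ and $\widetilde{\Phi}_{b,2}^\sigma$ to combine symmetrically and what guarantees that no dependence on the finer structure of $\sigma$ survives beyond the displayed double sum.
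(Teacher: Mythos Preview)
The paper explicitly omits the proof of this lemma: Section~\ref{numerical} opens with ``We avoid all the proofs in this section, since they contain elementary, but very lengthy and technical calculations,'' and Remark~\ref{simpleform} refers back to ``tedious computations, similar to those we needed to prove Lemma~\ref{altern}.'' So there is no paper proof to compare against beyond the indication that it is a direct elementary evaluation.

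Your plan is consistent with that indication and is sound in structure. The limit statement is indeed an immediate consequence of Theorem~\ref{theo}. For the closed form, your ingredients are the right ones: the breakpoint representation $\varphi_{b,h}^\sigma(k/b)=N_h(k)-hk/b$, the exact integration of a product of two affine functions on each subinterval, the reflection $\varphi_{b,h}^{\overline{\sigma}}=-\varphi_{b,b-h}^{\sigma}$ from Lemma~\ref{previous}, and the summation over $h$ via indicator identities. One small point to tighten: you attribute the factor $\max(k_1+k_2,b-1)$ to the reflected count $\sum_h \mathbf{1}[\sigma(k_1)<h]\,\mathbf{1}[\sigma(k_2)<b-h]$, but that sum compares $\sigma(k_1)+\sigma(k_2)$ with $b-1$, not $k_1+k_2$. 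In the actual bookkeeping the indices $k_1,k_2$ in the final formula arise from the position variables (the $l$'s inside $N_h(k)$), while $\sigma$ only survives through $\max(\sigma(k_1),\sigma(k_2))$; the term $\max(k_1+k_2,b-1)$ comes from the combination of the $\widetilde{\Phi}$-pieces after exploiting the $\mathcal{A}_b(\tau)$ symmetry $\sigma(b-1-k)=b-1-\sigma(k)$ to reindex reflected sums. This is exactly the ``bookkeeping obstacle'' you flag, so just be prepared that the final relabelling uses $\sigma\in\mathcal{A}_b(\tau)$ more substantively than your last paragraph suggests.
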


From this result we can deduce the following rule, which states that the constant $c_b^{\sigma}$ is invariant with respect to switching two complementary elements in the permutation $\sigma$.

\begin{corollary} \label{vertauschungsregel} Let $\sigma\in\mathcal{A}_b(\tau)$ and $d\in\{0,\dots,b-1\}$. Then we define the permutation $\widehat{\sigma}\in\mathcal{A}_b(\tau)$ in the following way: For $k\in\{0,\dots,b-1\}\setminus\{d,b-1-d\}$ we set $\widehat{\sigma}(k)=\sigma(k)$ and additionally we set $\widehat{\sigma}(d)=\sigma(b-1-d)$ and $\widehat{\sigma}(b-1-d)=\sigma(d)$. Then we have $c_b^{\sigma}=c_b^{\widehat{\sigma}}$.
\end{corollary}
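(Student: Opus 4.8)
The plan is to reduce the statement to an invariance property of the single $\sigma$-dependent summand in Lemma~\ref{altern}. Every term in the formula for $c_b^{\sigma}$ except the double sum is a rational expression in $b$ alone, so writing $w(k_1,k_2):=\frac{b}{2}\big(\max(k_1,k_2)+\max(k_1+k_2,b-1)\big)-k_1^2-k_1$ and $T(\sigma):=\sum_{k_1,k_2=0}^{b-1}\max(\sigma(k_1),\sigma(k_2))\,w(k_1,k_2)$, we have $c_b^{\sigma}=(\text{a term independent of }\sigma)+\tfrac{4}{b^3}T(\sigma)$, and it suffices to prove $T(\sigma)=T(\widehat{\sigma})$. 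First I would check that $\widehat{\sigma}\in\mathcal{A}_b(\tau)$: this uses the defining relation $\sigma(b-1-k)=b-1-\sigma(k)$ of $\mathcal{A}_b(\tau)$ and guarantees that Lemma~\ref{altern} is applicable to $\widehat{\sigma}$ as well. Set $e:=b-1-d$; if $d=e$ (possible only when $b$ is odd and $d=(b-1)/2$) then $\widehat{\sigma}=\sigma$ and nothing is to prove, so I may assume $d\neq e$.

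Since $\widehat{\sigma}$ and $\sigma$ agree except at the two arguments $d$ and $e$, only pairs $(k_1,k_2)$ with $k_1\in\{d,e\}$ or $k_2\in\{d,e\}$ contribute to $T(\widehat{\sigma})-T(\sigma)$. The swap leaves the terms $(d,e)$ and $(e,d)$ unchanged, because the two exchanged values enter the maximum symmetrically. Writing $\delta_m:=\max(\sigma(e),\sigma(m))-\max(\sigma(d),\sigma(m))$ and $W(k_1,k_2):=w(k_1,k_2)+w(k_2,k_1)$, and noting that the contribution coming from the argument $e$ is in each case exactly the negative of the one coming from $d$, I expect to arrive at
\[ T(\widehat{\sigma})-T(\sigma)=\big(\sigma(e)-\sigma(d)\big)\big(w(d,d)-w(e,e)\big)+\sum_{\substack{m=0\\ m\neq d,e}}^{b-1}\delta_m\big(W(d,m)-W(e,m)\big). \]

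The proof then hinges on two reflection identities for the weight, valid for all $k,m$ with $e=b-1-d$, namely $w(d,d)=w(e,e)$ and $W(d,m)=W(e,m)$. Both are consequences of the shift rule $\max(x,y)+c=\max(x+c,y+c)$ together with its analogue for the minimum. Rewriting $\max(k_1+k_2,b-1)=b-1+\max(k_1+k_2-(b-1),0)$ and balancing these truncated maxima against the quadratic terms $k_i^2+k_i$, the $b$-dependent contributions telescope; concretely one obtains $W(d,m)-W(e,m)=b\big(d+\min(b-1-d,m)-\min(d+m,b-1)\big)$, and since $d+\min(b-1-d,m)=\min(b-1,d+m)$ this difference vanishes. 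The identity $w(d,d)=w(e,e)$ follows in the same manner. Substituting both into the displayed formula gives $T(\widehat{\sigma})-T(\sigma)=0$, whence $c_b^{\sigma}=c_b^{\widehat{\sigma}}$.

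The only genuine difficulty is uncovering these two symmetries of $w$; once they are available the swap-invariance of $T$ is immediate, and it is worth noting that they hold for every $\sigma\in\mathfrak{S}_b$, the hypothesis $\sigma\in\mathcal{A}_b(\tau)$ being used only to ensure $\widehat{\sigma}\in\mathcal{A}_b(\tau)$ so that the formula of Lemma~\ref{altern} is legitimate for both permutations. I expect the main obstacle to be purely computational: verifying that the quadratic terms $-k_i^2-k_i$ and the truncated maxima $\max(k_1+k_2,b-1)$ cancel precisely, which is handled most transparently by the max/min shift identity rather than by a brute-force case analysis on the relative ordering of $d,e,m$ and $b-1$.
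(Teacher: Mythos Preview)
Your proposal is correct and follows exactly the route the paper indicates: deduce the invariance directly from the explicit formula for $c_b^{\sigma}$ in Lemma~\ref{altern}. The paper itself omits the proof entirely, stating only that the results of Section~\ref{numerical} follow by ``elementary, but very lengthy and technical calculations''; your reduction to the two reflection identities $w(d,d)=w(e,e)$ and $W(d,m)=W(e,m)$ (both of which check out, e.g.\ via $\max(d,m)-\max(e,m)=(d-e)+\min(e,m)-\min(d,m)$ and $\max(d+m,b-1)-\max(e+m,b-1)=\min(d,m)-\min(e,m)$) is a clean way to carry this out without a case analysis.
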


Now we would like to find for each base $b$ the permutation $\sigma_{b}^{\min}\in\mathcal{A}_b(\tau)$ for which the constant $c_b^{\sigma}$ becomes minimal. We therefore employ computer search algorithms. Corollary~\ref{vertauschungsregel} allows us to reduce the number of permutations we have to check significantly. We do not have to check every single permutation that is contained in $\mathcal{A}_b(\tau)$, but only those which are elements of the subset
 $$ \mathcal{B}_b(\tau):=\left\{\sigma\in\mathcal{A}_b(\tau): \sigma(k)\in \big\{0,1,\dots, \big\lfloor \frac{b-1}{2}\big\rfloor\big\} \text{ for all } k\in\big\{0,1,\dots, \big\lfloor \frac{b-1}{2}\big\rfloor\big\}\right\}. $$
That means we have to check $\lfloor b/2 \rfloor !$ permutations instead of $2^{\lfloor b/2 \rfloor}\lfloor b/2 \rfloor !$ to find the minimum value for $c_b^{\sigma}$. Our numerical investigations show that there are often several permutations $\sigma\in\mathcal{B}_b(\tau)$ where the minimum value for $c_b^{\sigma}$ is attained. Table~\ref{table1} lists for each base $b\in\{2,\dots,27\}$ one permutation $\sigma\in\mathcal{B}_b(\tau)$ where $c_b^{\sigma}$ is minimal and the number $g_b$ of permutations in $\mathcal{B}_b(\tau)$ which give the minimal value for $c_b^{\sigma}$. Then there are $2^{\lfloor b/2 \rfloor}g_b$ permutations in $\mathcal{A}_b(\tau)$ which yield the lowest constant in each base. Of course, we also present the corresponding values for $c_b^{\sigma}$ and $\sqrt{c_b^{\sigma}/\log{b}}$. Since the permutations in $\mathcal{B}_b(\tau)$ are completely determined by the permutation of the digits $0,1,\dots,\lfloor (b-1)/2 \rfloor$, we only give these partial permutations in Table~\ref{table1}. We use the usual cycle notation. For instance, the permutation $(0,1,2)\in\mathcal{B}_7(\tau)$ on the set $\{0,1,\dots,6\}$ is given by $\sigma(0)=1$, $\sigma(1)=2$ and $\sigma(2)=0$. The values of $\sigma(3)$, $\sigma(4)$, $\sigma(5)$ and $\sigma(6)$ can then be obtained through the relation $\sigma(6-k)=6-\sigma(k)$ for $k=0,1,2,3$. 

\begin{table}[ht]
\centering
\begin{tabular}[h]{ |c| c| c |c| c| } \hline 
  $\mathbf{b}$ & $\mathbf{\sigma_b^{\min}}$ & $\mathbf{g_b}$ & $\mathbf{c_b^{\sigma}}$ & $\mathbf{\sqrt{\frac{c_b^{\sigma}}{\log{b}}}}$  \\ \hline
  2 & $\id$ & 1 & 1/24 & 0.245178 \\ 
  3 & $\id$ & 1 & 5/81 &   0.237039 \\ 
	4 & $\id$ & 2 & 1/12 &   0.245178 \\ 
	5 & $(0,1)$ & 1 & 29/375 &  0.219202 \\
	6 & $(0,1)$ & 4 & 67/648 &  0.240220 \\ 
	7 & $(0,1,2)$ & 2 & 2/21 &   0.221229 \\ 
	8 & $(0,2,3,1)$ & 2 & 3/32 &   0.212330 \\ 
	9 & $(0,1,3)$ & 4 & 26/243 &   0.220671 \\ 
	10 & $(0,3,4,1)$ & 2 & 111/1000 &   0.219560 \\ 
	11 & $(0,2)(1,4)$ & 1 & 415/3993 &   0.208189 \\ 
	12 & $(0,3)(2,5)$ & 2 & 35/324 &   0.208500 \\ 
	13 & $(0,2)(1,5)(3,4)$ & 1 & 55/507 &  0.205654 \\ 
	14 & $(0,2)(1,5)(4,6)$ & 2 & 983/8232 & 0.212715 \\ 
	15 & $(0,4)(2,6)$ & 3 & 236/2025 &   0.207450 \\ 
	16 & $(0,5,4)(2,3,7)$ & 4 & 23/192 &   0.207859 \\ 
	17 & $(0,3,5,6,4,2)(1,7)$ & 2 & 584/4913 & 0.204829 \\ 
	18 & $(0,5,8,3)(1,2,7,6)$ & 2 & 241/1944 &  0.207101 \\
	19 & $(0,5)(2,8)(4,6,7)$ & 2 & 827/6859 &  0.202358 \\ 
	20 & $(0,2,4)(1,8)(3,6)(5,7,9)$ & 8 & 193/1500 &   0.207243 \\ 
	21 & $(0,6)(2,9)(5,8)$ & 1 & 491/3969 &  0.201576 \\ 
	22 & $(0,4,2,1,9,8,5,6,10,3,7)$ & 8 & 4219/31944 &   0.206708 \\ 
	23 & $(0,6)(2,10)(4,8)(7,9)$ & 1 & 4586/36501 &   0.200175 \\ 
	24 & $(0,7,11,3,5,8,1,2,10,9,6,4)$ & 16 & 343/2592 &   0.204055 \\ 
	25 & $(0,4,6,8,10,7)(1,9,5,3,11,2)$ & 8 & 1234/9375 &   0.202218 \\ 
	26 & $(0,7,12,5)(1,2,11,10)(3,4,9,8)$ & 2 & 2236/17576 &   0.198792 \\ 
	27 & $(0,3,1,10,6,8,11,9,4,12,2,7)$ & 14 & 289/2187 &   0.200235 \\ \hline
\end{tabular}
\caption{Numerical results for the full search in $\mathcal{B}_b(\tau)$}
\label{table1}
\end{table}

Finally, we would like to explain how the algorithm we used to create the results in Table~\ref{table1} works. 
At first we define several global variables. 

\begin{algorithmic} \tt
\State double $\mathtt{b}$
\State double $\mathtt{min}$
\State $\mathtt{byte}[\,\,]$ $\mathtt{sigma}$
\State List<$\mathtt{byte}[\,\,]$> $\mathtt{list}$
\end{algorithmic}

In the function $\textsc{Minimum}$, the variable $b$ gets the value of the base the user enters. For the variable $\min$, we 
choose the largest possible integer value. The variable $m$ gives the length of the permutation we
would like to create. Then we create a new array $\mathrm{sigma}$ of length $n$ and set $\mathrm{sigma}[i]=i$
for all $i\in \{0,1,\dots,m-1\}$. We also generate a list, in which the permutations with minimal constant $c_b^{\sigma}$ are stored. Finally, we call the function $\textsc{Perm}$ with the value $\lfloor b/2 \rfloor-1$. 

\begin{algorithmic} \tt
\Function{Minimum()}{} 
 \State $\mathtt{b \gets base}$
 \State $\mathtt{min\gets 2^{31}-1}$
 \State int $\mathtt{m\gets \lfloor b/2 \rfloor} $ 
       \If {$\mathtt{Mod[b,2]==1}$}
          \State $\mathtt{m\gets m+1}$
        \EndIf
 \State $\mathtt{sigma \gets \text{ new } \mathtt{byte}[m]}$
 \State $\mathtt{list \gets \text{ new } \mathtt{ArrayList<>(\,\,)}}$
 \For{int $\mathtt{i:=0 \to m-1; \,\, i{+}{+}}$}
    \State $\mathtt{sigma[i]=i}$ 
 \EndFor
 \State $\mathtt{\Call{Perm}{\lfloor b/2 \rfloor-1}}$
\EndFunction
\end{algorithmic}

The function $\textsc{Perm}$ calculates all valid permutations and inserts
them into the formula for $c_b^{\sigma}$ given in Lemma~\ref{altern}. It uses
the function $\textsc{Sum}$, which is explained below. Whenever
a new minimal value for $c_b^{\sigma}$ is reached, this value is stored in a list, while the
list of the previous minimums is emptied (list.clear($\mathrm{sigma}$)). In case that the new calculated value
matches the current minimal value, it is added to the list of minimums (list.add($\mathrm{sigma}$)).
The permutations are created by calling the function $\textsc{Perm}$ recursively. The function $\textsc{Swap}(i,m)$
exchanges $\mathrm{sigma}[i]$ and $\mathrm{sigma}[m]$ in the array $\mathrm{sigma}$.

\begin{algorithmic}
\Function{Perm}{$\mathtt{m}$} 
   \If {$\mathtt{m==0}$}
    \State double $\mathtt{result \gets \frac{16-12b-111b^2+228b^3-112b^4}{72b^2}-\frac{1-(-1)^b}{16b^3}+\frac{4}{b^3}\Call{Sum()}{}}$
    \If {$\mathtt{result<\mathtt{min}}$}
        \State $\mathtt{min \gets result}$
        \State list.clear()
    \EndIf
    \If {$\mathtt{result==\mathtt{min}}$}
        \State list.add($\mathtt{sigma}$)
    \EndIf
\Else
  \State $\mathtt{\Call{Perm}{m-1}}$
  \For{int $\mathtt{i:=0 \to m-1; \,\, i{+}{+}}$}  
    \State $\mathtt{\Call{Swap}{i,m}}$
    \State $\mathtt{\Call{Perm}{m-1}}$
    \State $\mathtt{\Call{Swap}{i,m}}$
  \EndFor  
\EndIf
\EndFunction
\end{algorithmic}

\begin{algorithmic}

\Function{Swap}{$i,m$} 
   \State int $\mathtt{temp}\gets \mathtt{sigma[i]}$
   \State $\mathtt{sigma[i]}\gets \mathtt{sigma[m]}$
   \State $\mathtt{sigma[m]}\gets \mathtt{temp}$
\EndFunction
\end{algorithmic}

The sum in the formula for $c_b^{\sigma}$ given in Lemma~\ref{altern} is computed with two \textbf{for}-loops. 
We distinguish four cases to calculate $\max(\sigma(k_1),\sigma(k_2))$ effectively.

\begin{algorithmic}
\Function{Sum()}{} 
 \State double $\mathtt{sum \gets 0}$
 \For{int $\mathtt{k_1:=0 \to b-1; \,\, i{+}{+}}$}   \For{int $\mathtt{k_2:=0 \to b-1; \,\, i{+}{+}}$}  
    \State double $\mathtt{max \gets 0}$
    \If {$\mathtt{k_1\geq m}$ and $\mathtt{k_2\geq m}$}
       \State $\mathtt{max \gets \max(b-1-\mathtt{sigma}[b-1-k_1],b-1-\mathtt{sigma}[b-1-k_2])}$
      \Else
          \If {$\mathtt{k_1\geq m}$ and $\mathtt{k_2< m}$}
            \State $\mathtt{max \gets b-1-\mathtt{sigma}[b-1-k_1]}$
               \Else
                  \If {$\mathtt{k_1< m}$ and $\mathtt{k_2\geq m}$}
                     \State $\mathtt{max \gets b-1-\mathtt{sigma}[b-1-k_2]}$
                        \Else
                           \State $\mathtt{max \gets \max(\mathtt{sigma}[k_1],\mathtt{sigma}[k_2])}$
                  \EndIf
          \EndIf
    \EndIf
    \State $\mathtt{sum \gets \mathtt{sum}+\big(\frac{b}{2}(\max(k_1,k_2)+\max(k_1+k_2,b-1))-k_1^2-k_1\big)\cdot \mathtt{max}}$
  \EndFor \EndFor
  \State \Return $\mathtt{sum}$
\EndFunction

\end{algorithmic}

\section{Conclusions} \label{conc}

We should compare our numerical results to those in Section 5 of \cite{FPPS09}. There the authors searched for the best permutations $\sigma\in\mathcal{A}_b(\tau)$ to obtain a minimal $L_2$ discrepancy of the digit scrambled Hammersley point sets $\cR_{b,n}^{\Sigma}$, where $\Sigma\in\{\sigma,\osigma\}^n$.
The authors obtained the lowest $L_2$ discrepancy overall in base $22$; the corresponding leading constant is $0.179069...$ This number has 
already been mentioned in the introduction. We obtain the lowest leading constant for $L_2(\cR_{b,n}^{\Sigma,\sym})$ in base $26$, namely $0.198792...$. In general, the minimal constants of the symmetrized Hammersley point sets considered in this work are slightly higher
than the minimal constants of the digit scrambled Hammersley point sets in every base, at least up to base $23$ (Table 1 in \cite{FPPS09} ends after this base). The advantage of the symmetrized point sets is the fact that we do not have to care about the arrangement of $\sigma$ and $\osigma$ in $\Sigma$ (see Remark~\ref{arrang}). As for $L_2(\cR_{b,n}^{\Sigma})$, this is the case if and only if $\Phi_b^{\sigma}=0$ (see \cite[Table 2]{FPPS09}). Additionally, Example~\ref{example} indicates that the values of $c_b^{\sigma}$ for "good" and "bad" permutations do not spread so much for the symmetrized point sets as it is the case for
the digit scrambled point sets. Indeed, our numerical results suggest that for even bases the highest value for $c_b^{\sigma}$ of all $\sigma\in\mathcal{B}_b(\tau)$ is always attained only for the identity and the permutation which is determined through the relations $\sigma(k)=b/2-1-k$ for all $k\in\{0,\dots,b/2-1\}$ and that for odd bases $\max_{\sigma \in \mathcal{B}_b(\tau)}c_b^{\sigma}$ is attained if and only if $\sigma=\id$. It remains an unresolved question if there exists a positive absolute constant $C\in\RR$ such that
$ \min_{\sigma \in \mathcal{B}_b(\tau)}c_b^{\sigma} \leq C $
for all bases $b\geq 2$.

\bigskip

\noindent {\bf Acknowledgments.} The first author is supported by the Austrian Science Fund (FWF): Project F5509-N26, which is a part of the Special Research Program "Quasi-Monte Carlo Methods: Theory and Applications".

\noindent Ralph Kritzinger, Institut f\"{u}r Finanzmathematik und angewandte Zahlentheorie, Johannes Kepler Universit\"{a}t Linz, Altenbergerstra{\ss}e 69, A-4040 Linz, Austria. Email: ralph.kritzinger(at)jku.at \\

\noindent Lisa M. Kritzinger, Johannes Kepler Universit\"{a}t Linz, Altenbergerstra{\ss}e 69, A-4040 Linz, Austria. Email: k1255353(at)students.jku.at

\end{document}